\newtheorem{proposition}{Proposition}[section]
\newtheorem{theorem}[proposition]{Theorem}
\newtheorem{lemma}[proposition]{Lemma}
\newtheorem{definition}[proposition]{Definition}
\newcommand\sqr[2]{{\vcenter{\vbox{\hrule height.#2pt
   \hbox{\vrule width.#2pt height#1pt \kern#1pt
      \vrule width.#2pt}
   \hrule height.#2pt}}}}
\newcommand{\ds}{\displaystyle}
\renewcommand{\epsilon}{\varepsilon}
\newcommand{\eps}{\epsilon}
\newcommand{\BV}{\mathbf{BV}}
\newcommand{\C}[1]{{{\mathbf{C}^\mathbf{#1}}}}
\newcommand{\Lloc}[1]{\mathbf{L}^{\mathbf{1}}_{\mathrm{loc}}}
\newcommand{\R}{{\mathbb{R}}}
\newcommand{\N}{\mathbb{N}}
\newcommand{\Id}{I\!d}
\newcommand{\RII}{R_{I\!I}}
\newcommand{\RLL}{R_{L\!L}}
\newcommand{\RLI}{R_{L\!I}}
\newcommand{\RIL}{R_{I\!L}}
\newcommand{\xa}{\mathpzc{a}}
\newcommand{\xb}{\mathpzc{b}}
\DeclareMathAlphabet{\mathpzc}{OT1}{pzc}{m}{it}
\newcommand{\sgn}{\mathrm{sgn}}
\newcommand{\tv}{\mathop{\rm TV}}
\newlength{\captionwidth}
\long\def\@makecaption#1#2{%
   \vskip 10\p@
   \setbox\@tempboxa\hbox{#1: #2}%
   \ifdim \wd\@tempboxa > \captionwidth %\hsize
       \hbox to\hsize{\hfil
       \parbox[t]{\captionwidth}{
       #1: #2\par}
       \hfil}
     \else
       \hbox to\hsize{\hfil\box\@tempboxa\hfil}%
   \fi}
\title{Global existence of solutions for a multi-phase flow: \\
a bubble in a liquid tube and related cases}
\author{}
\author{Debora Amadori\footnote{Department of Engineering and Computer Science and Mathematics, University of L'Aquila,
Italy}
\and Paolo Baiti\footnote{Department of Mathematics and Computer Science, University of Udine, Italy}
\and Andrea Corli\footnote{Department of Mathematics and Computer Science,
University of Ferrara, Italy}
\and Edda Dal Santo\footnotemark[2]
}
\date{February 20, 2015}
\begin{document}

\maketitle

\par\vspace*{-.03\textheight}{\center
{\emph{Dedicated to Professor Tai-Ping Liu on the occasion of his 70th birthday}}\par}

\begin{abstract}
In this paper we study the problem of the global existence (in time) of weak, entropic solutions to a system of three hyperbolic conservation laws, in one space dimension, for large initial data. The system models the dynamics of phase transitions in an isothermal fluid; in Lagrangian coordinates, the phase interfaces are represented as stationary contact discontinuities. We focus on the persistence of solutions consisting in three bulk phases separated by two interfaces. Under some stability conditions on the phase configuration and by a suitable front tracking algorithm we show that, if the $\BV$-norm of the initial data is less than an explicit (large) threshold, then the Cauchy problem has global solutions.
\end{abstract}

\smallskip

\noindent\textit{2010~Mathematics Subject Classification:} 35L65,
35L60, 35L67, 76T99.

\smallskip

\noindent\textit{Key words and phrases:}
Hyperbolic systems of conservation laws, phase transitions, wave-front tracking algorithm.

%%%%%%%%%%%%%%%%%%%%%%%%%%%%%%%%%%%%%%%%%%%%%%%%%%%%%%%%%%%%%%%%%

\section{Introduction}\label{sec:intro}
\setcounter{equation}{0}

This paper concludes a long analysis, begun in \cite{ABCD, ABCD2}, concerning the global existence in time of weak entropy solutions for a system of conservation laws modeling phase transitions in a fluid. More precisely, the focus of the analysis is on the persistence of solutions with \emph{stationary interfaces}.

The system under consideration consists of three equations, namely,
\begin{equation}\label{eq:system}
\left\{
\begin{array}{ll}
v_t - u_x &= 0\,,\\
u_t + p(v,\lambda)_x &= 0\,,
\\
\lambda_t &= 0\,,
\end{array}
\right.
\end{equation}
where the state variables $(v,u,\lambda)\in \Omega:=]0,+\infty[ \times \R \times [0,1] $ denote the specific volume, the velocity and the mass-density fraction of the vapor in the fluid, respectively. System \eqref{eq:system} is the conservative part of a more complex model first introduced in \cite{Fan}, one of whose novelties lies in the pressure $p$, which depends not only on $v$ but also on $\lambda$. More precisely, the pressure is prescribed by the law
\begin{equation}\label{eq:pressure}
p(v,\lambda)=\frac{a^2(\lambda)}{v},
\end{equation}
where the $\C{1}$ function $a$ is assumed to be strictly positive in $[0,1]$. For example, we can take $p=(1+\lambda)k/v$ for some positive constant $k$.

A related example of pressure law occurs in the theory of (non-isothermal) ionized gases. In that case, we have $p = (1+\alpha)RT/mv$, where $R$, $m$ and $T$ are the universal gas constant, the molecular mass and the temperature, respectively, while $\alpha$ is the ionization degree; if $\alpha=0$ the gas is not ionized. In that model, however, $\alpha$ is usually assigned as a given function of both $p$ and $T$ by Saha's law \cite{Fermi} while, in the case under consideration, the mass-density fraction $\lambda$ is understood as an independent variable.

System \eqref{eq:system} is strictly hyperbolic in $\Omega$; the eigenvalues $\pm a/v$ are genuinely nonlinear while $0$ is linearly degenerate. As a consequence, the field associated to this latter eigenvalue supports contact discontinuities, which are understood in the model as \emph{phase interfaces}.

The Cauchy problem for \eqref{eq:system} includes the initial data
\begin{equation}
\left(v_o(x), u_o(x), \lambda_o(x) \right),\qquad v_o(x)\ge \underline{v}>0,\ x\in\mathbb{R}.
\label{eq:init-data-o}
\end{equation}
The global existence (in time) of solutions to the initial-value problem for any strictly hyperbolic system of conservation laws whose eigenvalues do not change type is well-known and can be proved either with the Glimm scheme or with a front tracking algorithm, see \cite{Dafermos, Bressanbook}. The case of \emph{large} initial data is a challenging problem and can be tackled only for special systems; then, the issue is \emph{to find classes of initial data with (large) total variation for which global solutions to the Cauchy problem exist}. This topic was studied in \cite{Nishida68} and \cite{NishidaSmoller} in the case of the isothermal, respectively isentropic, $p$-system, see also \cite{DiPerna1, DiPerna2}; these results were extended in the seminal papers \cite{LiuIB,Liu} to the case of nonisentropic gas dynamics. We also refer to \cite{GST2007} for the extension of Nishida's result to the initial-value problem in Special Relativity.

As far as \eqref{eq:system}-\eqref{eq:init-data-o} is concerned, a positive answer was first given in \cite{amadori-corli-siam} and then in \cite{Asakura-Corli}. In particular, in the former paper an \emph{explicit} threshold of the $\BV$-norm of the initial data was provided in order to have the global existence of solutions. Moreover, both papers require a sort of balance of the $\BV$-norm of the initial data: the larger the variation of $(v_o,u_o)$, the smaller the variation of $\lambda_o$ and vice-versa. It would be interesting to prove whether or not, for general $\BV$-data, the solution exists globally in time. Motivated by the techniques introduced in \cite{amadori-corli-siam}, we considered in \cite{ABCD} the special case of initial data \eqref{eq:init-data-o} where
\[
\lambda_o(x) = \left\{
\begin{array}{ll}
\lambda_\ell & \hbox{ if }x<0 \,,\\
\lambda_r & \hbox{ if }x>0\,,\\
\end{array}
\right.
\]
which models the dynamics of a two-phases fluid. In order to deal with this particular framework, an original Riemann solver was proposed by which we proved the global existence of solutions for a wide class of large initial data. Such results improved by far those of \cite{amadori-corli-siam} when adapted to that setting. We briefly mention that the problem of \emph{smooth} perturbations of Riemann-type solutions has been studied by many authors, see for instance \cite{Li-Yu}.

The next step regarded the case of initial data with \emph{two} phase interfaces, namely,
\begin{equation}
\lambda_o(x) = \left\{
\begin{array}{ll}
\lambda_\ell & \hbox{ if }x<\xa \,,\\
\lambda_m & \hbox{ if }\xa<x<\xb\,,\\
\lambda_r & \hbox{ if }x>\xb\,,\\
\end{array}
\right.
\label{eq:init-data}
\end{equation}
where $\lambda_\ell,\lambda_m,\lambda_r$ are constant in $[0,1]$ and $\xa<\xb$ are the location of the waves; denote $a_\ell=a(\lambda_\ell)$, $a_m=a(\lambda_m)$, $a_r=a(\lambda_r)$. Clearly, this case is much more complicated than the previous one, because of the possible bouncing back and forward of the waves in the middle region $[\xa,\xb]$. In \cite{ABCD2} we answered in the positive to the above issue by assuming the condition $a_m < \min\{a_\ell,a_r\}$; if $v$ is fixed, then the pressure in the middle region is lower than the pressure in the outside regions. We also recovered the results of \cite{ABCD} by passing to the limit when either $a_m\to a_\ell$ or $a_m\to a_r$. To give a physical flavor to the problem, assume for a moment that the function $a(\lambda)$ is increasing with $\lambda$, as is the interesting case in modeling. Then the condition above also means that the mass-density fraction of vapor inside $[\xa,\xb]$ is less than outside: for brevity, we address to this case as the \emph{drop} case. We refer to \cite{amadori-corli-siam, ABCD, ABCD2} for more details on the model and references.

In this paper, we deal with the two remaining cases, namely,
\[
\hbox{ either } \quad a_m > \min\{a_\ell,a_r\}\quad \hbox{ or } \quad a_\ell< a_m < a_r,
\]
since the case $a_\ell> a_m > a_r$ can be deduced by the latter. Reminding of the previous physical interpretation, we shall loosely address to these cases as the \emph{bubble} case and the \emph{increasing-pressure} case, respectively.
As in \cite{ABCD2}, we introduce two special Riemann solvers. Both of them replace the standard non-physical waves \cite{Bressanbook} by waves defined through \emph{integral curves}; this allows us to attach these waves to the phase waves. Then, following again the lines of \cite{ABCD2}, we introduce an \lq\lq asymmetrical\rq\rq\ Glimm functional $F$, in the sense that its interaction potential $Q$ takes into account only certain shock waves approaching the phase waves while, on the contrary, all rarefaction waves approaching the phase waves are included. We point out that the definition of $F$ differs in each of the three cases mentioned above.

A key feature of both \cite{ABCD2} and the current paper is the possible occurrence of a \emph{stability condition}, depending on the case under consideration, to control interactions in the middle zone. More precisely, such a condition is needed both in the drop and in the increasing-pressure cases and imposes bounds to the strengths of the phase waves. However, in the bubble case we require no stability condition and, as a consequence, any phase wave is admitted.

The plan of the paper is the following. The main result is stated in Section~\ref{sec:main}. In Section~\ref{sec:prelim} we introduce the Riemann solvers, the composite waves and, at last, we define the functional $F$; we also recall some background facts from \cite{amadori-corli-siam, ABCD, ABCD2}. Sections~\ref{sec:interbubble} and~\ref{sec:interdstep} focus on the bubble and on the monotone-pressure cases, respectively; in particular, we prove there that the functional $F$ decreases at every interaction. The last Section~\ref{sec:convergence} deals with the two cases at the same time; by showing the convergence of the front tracking algorithm we conclude the proof of the main result.

%%%%%%%%%%%%%%%%%%%%%%%%%%%%%%%%%%%%%%%%%%%%%%%%%%%%%%%%%%%%%%%%%%%%%%%%

\section{Main Results}\label{sec:main}
\setcounter{equation}{0}

In this section we state the main results of this paper, which concerns the existence of solutions to the Cauchy problem \eqref{eq:system}-\eqref{eq:init-data-o},\eqref{eq:init-data}. Under the notation defined in the Introduction, we set
\begin{equation}\label{def:etazeta}
  \eta = 2\, \frac{a_m-a_\ell}{a_m+a_\ell}\,,\qquad \zeta=2\, \frac{a_r-a_m}{a_r+a_m}\,.
\end{equation}
The quantities $\eta$ and $\zeta$ range over $]-2,2[\,$; they are the strengths of the two contact discontinuities (see \eqref{eq:2strength} below) that support the phase interfaces of the model, located at $x=\xa$ and $x=\xb$, respectively. We denote by $\mathcal{L},\mathcal{M},\mathcal{R}$ the three regions separated by $\eta$ and $\zeta$ in the $(x,t)$-plane, see Figure~\ref{fig:0}. %Notice that Theorem~\ref{thm:main} below does not depend on $\xa,\xb$.

As already mentioned in the Introduction, we focus on the \emph{bubble} case, which corresponds to consider $\eta>0$ and $\zeta<0$, and on the \emph{increasing-pressure} case, which corresponds to both $\eta>0$ and $\zeta>0$.

%%%%%%%%%%%%%%%%%%%%%%%% Figure 0 %%%%%%%%%%%%%%%%%%%

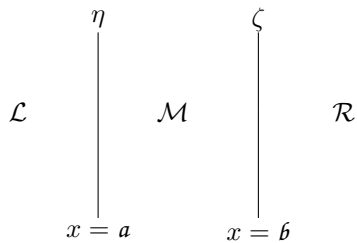
\begin{figure}[htbp]
\begin{picture}(100,100)(-80,-5)
\setlength{\unitlength}{1pt}

\put(170,0){
\put(0,0){\line(0,1){40}} \put(0,75){\makebox(0,0){$\zeta$}}
\put(-60,0){\line(0,1){40}} \put(-60,75){\makebox(0,0){$\eta$}}
\put(-60,-5){\makebox(0,0){$x=\xa$}}\put(0,-5){\makebox(0,0){$x=\xb$}}
\put(0,40){\line(0,1){30}} %\put(-2,75){\makebox(0,0){$\zeta_0$}}
\put(-60,40){\line(0,1){30}} %\put(-62,75){\makebox(0,0){$\eta$}}
\put(-32,40){\makebox(0,0){$\mathcal{M}$}}
\put(32,40){\makebox(0,0){$\mathcal{R}$}}
\put(-90,40){\makebox(0,0){$\mathcal{L}$}}
}

\end{picture}

\caption{\label{fig:0}{The two phase waves with strengths $\eta$ and $\zeta$ in the $(x,t)$-plane and the regions $\mathcal{L},\mathcal{M},\mathcal{R}$.}}
\end{figure}

%%%%%%%%%%%%%%%%%%%%%%%%%%%%% End Figure 0 %%%%%%%%%%%%

In order to state the existence theorem, we introduce some threshold functions. First, as in  \cite{ABCD,ABCD2}, we define the strictly decreasing function
\begin{equation}\label{eq:defK}
\mathcal{K}(r):=\frac{2}{1+r} \log\left(1+\frac{2}{r}\bigl(1+\sqrt{1+r}\bigr)\right)\,,\qquad r\in\R^+\,,
\end{equation}
which plays a key role in the main results; we notice that $\lim_{r\to 0^+}\mathcal{K}(r)=+\infty$ and $\lim_{r\to +\infty}\mathcal{K}(r)=0$. Moreover, as in \cite{ABCD2}, we need another function related to the stability of the two phase waves configuration, which differs from that in \cite{ABCD2}. In the bubble case it is
\begin{equation}\label{eq:b}
\mathcal{H}_b(|\eta|,|\zeta|):= \frac{4}{4-|\eta\zeta|}\max\left\{|\eta|\,\frac{2+|\zeta|}{2-|\zeta|},\,|\zeta|\,\frac{2+|\eta|}{2-|\eta|}\right\}\,,
\end{equation}
for $(|\eta|,|\zeta|)\in D_b:=[0,2[\times[0,2[\,$. As for the increasing-pressure case, the definition of the function $\mathcal{H}_c$ as well as that of its domain $D_c$ is more complicated though explicit; we refer to \eqref{eq:c} and \eqref{eq:dc} below, respectively, and to Figure~\ref{fig:Dc} for a picture of $D_c$. We can immediately observe that in the bubble case the pair $(|\eta|,|\zeta|)$ can vary inside the whole square $[0,2[\times[0,2[\,$, while in the increasing-pressure case they can cover only a portion of it.

%%%%%%%%%%%%%%%%%%%%%%%%%%%%%%%%%%%%%%%%%%%%%%%%%%%%%%%%%%%%%%%%%%%%%%%%%

\begin{figure}[htbp]
    \begin{center}
	\includegraphics[width=5cm]{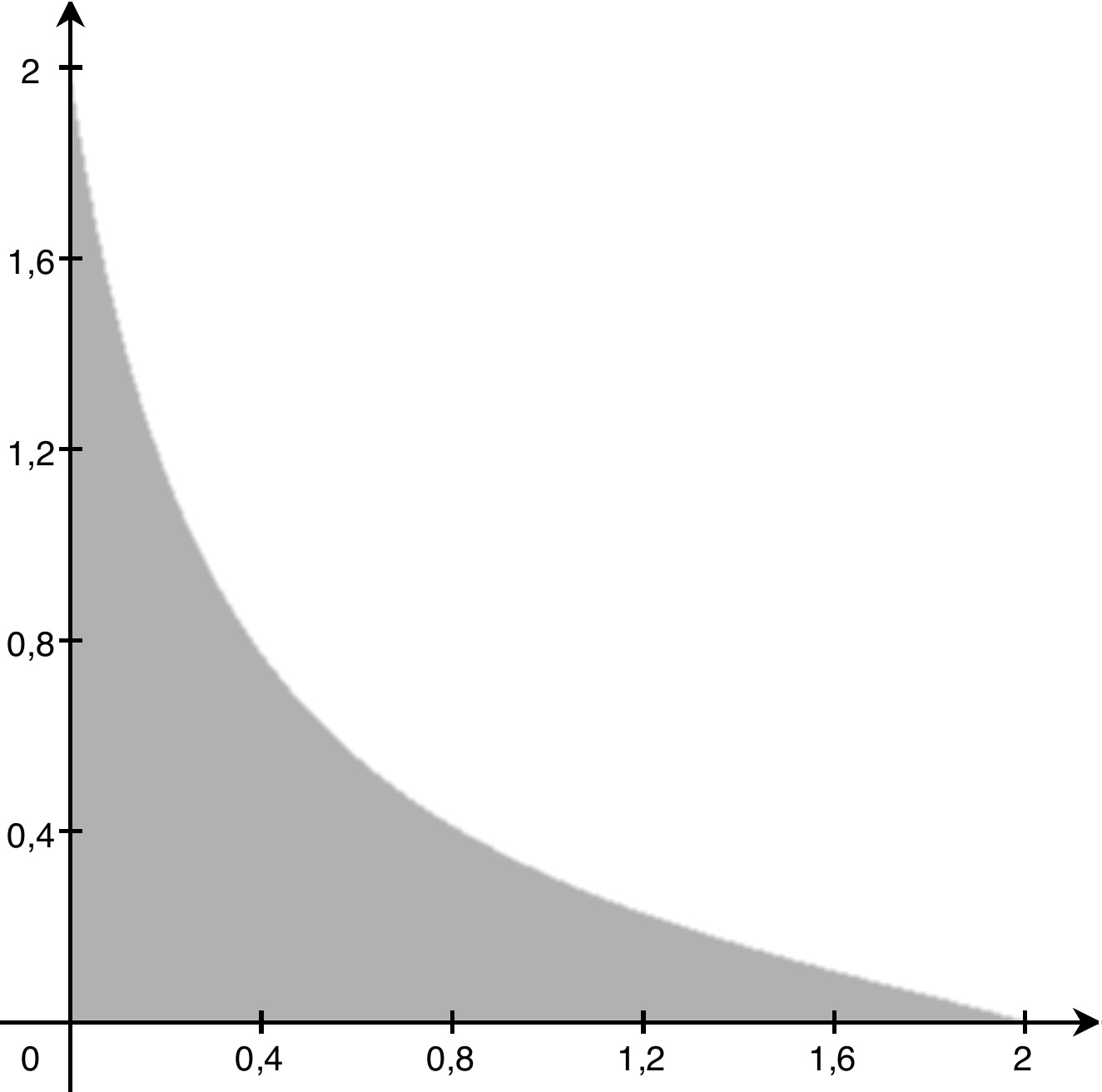}
   \begin{picture}(0,0)%
	    \put(-15,-5){$|\eta|$}%
	    \put(-155,130){$|\zeta|$}%
	\end{picture}%
    \end{center}
    \caption{\label{fig:Dc} The domain $D_c$ in the $(|\eta|,|\zeta|)$-plane.}
\end{figure}

%%%%%%%%%%%%%%%%%%%%%%%%%%%%%%%%%%%%%%%%%%%%%%%%%%%%%%%%%%%%%%%%%%%%%%%%%

We denote $p_o(x) = p\left(v_o(x), \lambda_o(x) \right)$. %and $\tv (f,g)= \tv f +\tv g$, for any function $f=f(x)$, $g=g(x)$.
The following theorem states the global in time existence of solutions in the bubble and in the increasing-pressure case. Notice that the statement is the same in both cases.

\begin{theorem}\label{thm:main}
Assume \eqref{eq:pressure} and consider initial data \eqref{eq:init-data-o},\eqref{eq:init-data}. Let $D=D_b$ and $\mathcal{H}=\mathcal{H}_b$ in the bubble case, $D=D_c$ and $\mathcal{H}=\mathcal{H}_c$ in the increasing-pressure case. Moreover, assume that for $\eta,\zeta$ as in \eqref{def:etazeta}, the pair $(|\eta|,|\zeta|)$ belongs to $D$. If
\begin{equation}\label{eq:hyp2}
\tv\left(\log(p_o)\right)+ \frac{1}{\min\{a_\ell,a_m,a_r\}}\tv\left(u_o\right) < \mathcal{K}\left(\mathcal{H}(|\eta|,|\zeta|)\right)
\end{equation}
holds, then the Cauchy problem \eqref{eq:system}-\eqref{eq:init-data-o},\eqref{eq:init-data} has a
weak entropic solution $(v,u,\lambda)$ defined for
$t\in\left[0,+\infty\right)$. If $\eta=\zeta=0$ the same conclusion holds with $\mathcal{K}\left(\mathcal{H}(|\eta|,|\zeta|)\right)$ replaced by $+\infty$ in \eqref{eq:hyp2}.

Moreover, the solution is valued in a compact set and $(v(t,\cdot),u(t,\cdot))\in L^\infty([0,\infty[; \BV(\R))$.
\end{theorem}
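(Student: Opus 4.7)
The plan is to construct weak entropic solutions as limits of wave-front tracking approximations, closely following the strategy of \cite{ABCD2} adapted to the new Riemann solvers and functionals appropriate to the bubble and increasing-pressure configurations. First, I would approximate the initial data by piecewise constant functions $(v_o^\nu, u_o^\nu, \lambda_o^\nu)$ in which $\lambda_o^\nu$ coincides with $\lambda_o$, so that the two stationary phase fronts at $x=\xa,\xb$ and their strengths $\eta,\zeta$ are preserved exactly, while $(v_o^\nu,u_o^\nu)\to (v_o,u_o)$ in $\L{1}$ and $\tv(\log p_o^\nu) + \tv(u_o^\nu)/\min\{a_\ell,a_m,a_r\}$ remains strictly below $\mathcal{K}(\mathcal{H}(|\eta|,|\zeta|))$. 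Each Riemann problem arising at $t=0$ and at every subsequent interaction is then resolved by the special Riemann solvers recalled in Section~\ref{sec:prelim}: acoustic shocks and rarefaction fronts in the bulk regions $\mathcal{L},\mathcal{M},\mathcal{R}$, two stationary contact discontinuities at $x=\xa,\xb$, and the composite waves attached to the phase fronts that replace the usual non-physical waves of \cite{Bressanbook}.

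The heart of the argument is the monotonicity of the asymmetric Glimm-type functional $F(t)=L(t)+K\,Q(t)$, where $L$ measures a weighted total variation of $(\log p,u)$ with weights depending on the bulk region, and $Q$ is the case-specific interaction potential introduced in Section~\ref{sec:prelim}. Sections~\ref{sec:interbubble} and~\ref{sec:interdstep} would verify, by a case analysis of every possible interaction --- acoustic/acoustic inside a bulk region, acoustic/phase, and interactions involving a composite wave --- that for $K$ sufficiently large one has $\Delta F\le 0$ at every interaction time. The stability conditions encoded in the domains $D_b,D_c$ and the threshold $\mathcal{K}\circ\mathcal{H}$ are precisely what is needed to control the repeated bouncing back and forth of acoustic fronts inside the middle region $\mathcal{M}$: each reflection at $x=\xa$ or $x=\xb$ amplifies an impinging front by factors encoded in $\mathcal{H}_b$ or $\mathcal{H}_c$, and the bound $F(0)<\mathcal{K}(\mathcal{H})$ is exactly the condition under which the geometric series of successive reflections converges.

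Once $F$ is shown to be non-increasing, one deduces uniform bounds on $\tv\bigl(\log p^\nu(t,\cdot)\bigr)+\tv\bigl(u^\nu(t,\cdot)\bigr)$, uniform upper and positive lower bounds on $v^\nu$, a uniform $\Linfty$ bound on $u^\nu$, and finiteness of the number of fronts and interactions on every strip $[0,T]\times\R$; hence the algorithm is well-defined globally in time. Helly's theorem together with a diagonal extraction then provide a subsequence converging in $\Lloc{}$ to a limit $(v,u,\lambda)$ that one verifies to be a weak entropic solution of \eqref{eq:system}; the $\Linfty([0,\infty[;\BV(\R))$ regularity of $(v,u)$ is inherited from the uniform $\BV$ bounds. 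This convergence step is postponed to Section~\ref{sec:convergence}. The main obstacle, as one expects, lies in the interaction analysis at the phase waves, and especially in the interactions involving composite waves: one must follow the strength of each composite wave through every collision and show that the decrease recorded in $L$ dominates the possible increase in $Q$ with a \emph{single} constant $K$ that works for all interactions and for the entire time evolution. From this bookkeeping the explicit forms of $\mathcal{H}_b$, $\mathcal{H}_c$ and the shape of the domain $D_c$ pictured in Figure~\ref{fig:Dc} naturally emerge. In the degenerate case $\eta=\zeta=0$ no reflections occur, the middle region is inactive, and one recovers the classical Nishida bound, consistent with the replacement of $\mathcal{K}(\mathcal{H})$ by $+\infty$ in the statement.
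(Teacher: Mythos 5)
Your overall plan --- front-tracking with the special Riemann solvers, an asymmetric Glimm-type functional $F$ shown to be non-increasing at every interaction, uniform $\BV$ bounds, Helly compactness and passage to the limit --- is exactly the strategy the paper carries out. However, the assertion that $\Delta F\le 0$ holds ``for $K$ sufficiently large'' misstates the crux of the argument and, taken literally, would break down. In the paper the functional is $F=L+L^0+Q$ where $L$ already contains the shock-weight $\xi$, $L^0$ tracks the composite-wave strengths $|\eta_0|+|\zeta_0|$, and the weights $K^{\ell,m,r}_{\eta,\zeta}$ sit \emph{inside} $Q$; these weights must satisfy \emph{lower} bounds coming from interactions with the phase waves (see \eqref{eq:composite2} and \eqref{eq:composite6}) and simultaneously \emph{upper} bounds coming from same-family interactions (see \eqref{cond13}$_{2,3}$, \eqref{cond13bis}), because when two acoustic waves interact and a new reflected wave is created, $Q$ picks up a term proportional to $K$, and too large a $K$ makes $\Delta F>0$. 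The stability domain $D$ and the window $1+\mathcal{H}(|\eta|,|\zeta|)\le\xi\le 1/c(m_o)$ (cf.\ \eqref{eq:ximo0}, \eqref{eq:cximo2s}) are precisely the conditions under which this system of upper and lower bounds on the parameters $\xi,K^{\ell,m,r}_{\eta,\zeta},\rho$ admits a solution; the threshold $\mathcal{K}(\mathcal{H})$ then translates the $m_o$-window into an explicit bound on $\tv(\log p_o)+\tv(u_o)/\min\{a_\ell,a_m,a_r\}$ via the identity $\mathcal{K}(r)=z(w^{-1}(r))$ with $w(m_o)=1/c(m_o)-1$ and $z(m_o)=2m_o\,c(m_o)$.

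Two further remarks. First, your ``geometric series of successive reflections'' picture is a reasonable heuristic but does not correspond to the mechanism behind $\Delta F\le 0$; that argument is a one-step interaction inequality combined with a bootstrap (Propositions~\ref{prop:lastb}, \ref{prop:lasts}) keeping shock sizes $\le m_o$ so that the wave-strength-dependent coefficient $c(m_o)$ stays controlled. The genuine geometric-series estimate appears in the consistency step \eqref{eq:decFk}, $\tilde{F}_k(t)\le\mu^{k-1}F_1(0)$ with $\mu<1$, which is used to show the non-physical components of the composite waves vanish as $\nu\to\infty$ --- a step your plan glosses over but which is essential because the algorithm here replaces standard non-physical fronts with waves attached to the phase interfaces. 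Second, the functional is genuinely asymmetric in the paper: certain shock fronts approaching a phase wave are deliberately \emph{excluded} from $Q$ (e.g., $3$-shocks hitting $\eta_0$ in the bubble case), because for those interactions $\Delta L$ is already nonpositive; the sign pattern in \eqref{eq:rels}--\eqref{eq:sign} is what justifies the exclusion, and this design is what makes the compatible parameter window non-empty in the first place.
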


\smallskip

The sub-level sets $\mathcal{S}_h=\left\{(|\eta|,|\zeta|)\in\mathcal{D}:\   \mathcal{H}(|\eta|,|\zeta|) < h\right\}$, $h>0$, of the function $\mathcal{H}$ play an important role in condition \eqref{eq:hyp2}, see for instance Figure \ref{fig:HH} in the bubble case. Indeed, condition \eqref{eq:hyp2} holds for every $(|\eta|,|\zeta|)\in \mathcal{S}_h$ if
\begin{equation*}
\tv\left(\log(p_o)\right)+ \frac{1}{\min\{a_\ell,a_m,a_r\}}\tv\left(u_o\right)
< \mathcal{K}(h),
\end{equation*}
since $\mathcal{K}$ is decreasing. When $h = 2$, we have $\mathcal{K}(2)=2\log(2+\sqrt{3})/3$ and the case is particularly significative; let us consider, for example, the bubble framework. As in the drop case \cite{ABCD2}, the domain $\mathcal{S}_2$ includes the segments $[0, 2[$ on each axis, but the $2$-level set of $\mathcal{H}_b$ is no more the graph of the function $\zeta(|\eta|) =2(2-|\eta|)/(2 + |\eta|)$, see Figure~\ref{fig:HH}.

%%%%%%%%%%%%%%%%%%%%%%%%%%%%%%%%%%%%%%%%%%%%%%%%%%%%%%%%%%%%%%%%%%%%%%%%%

\begin{figure}[htbp]
   \begin{center}
       \includegraphics[width=8cm]{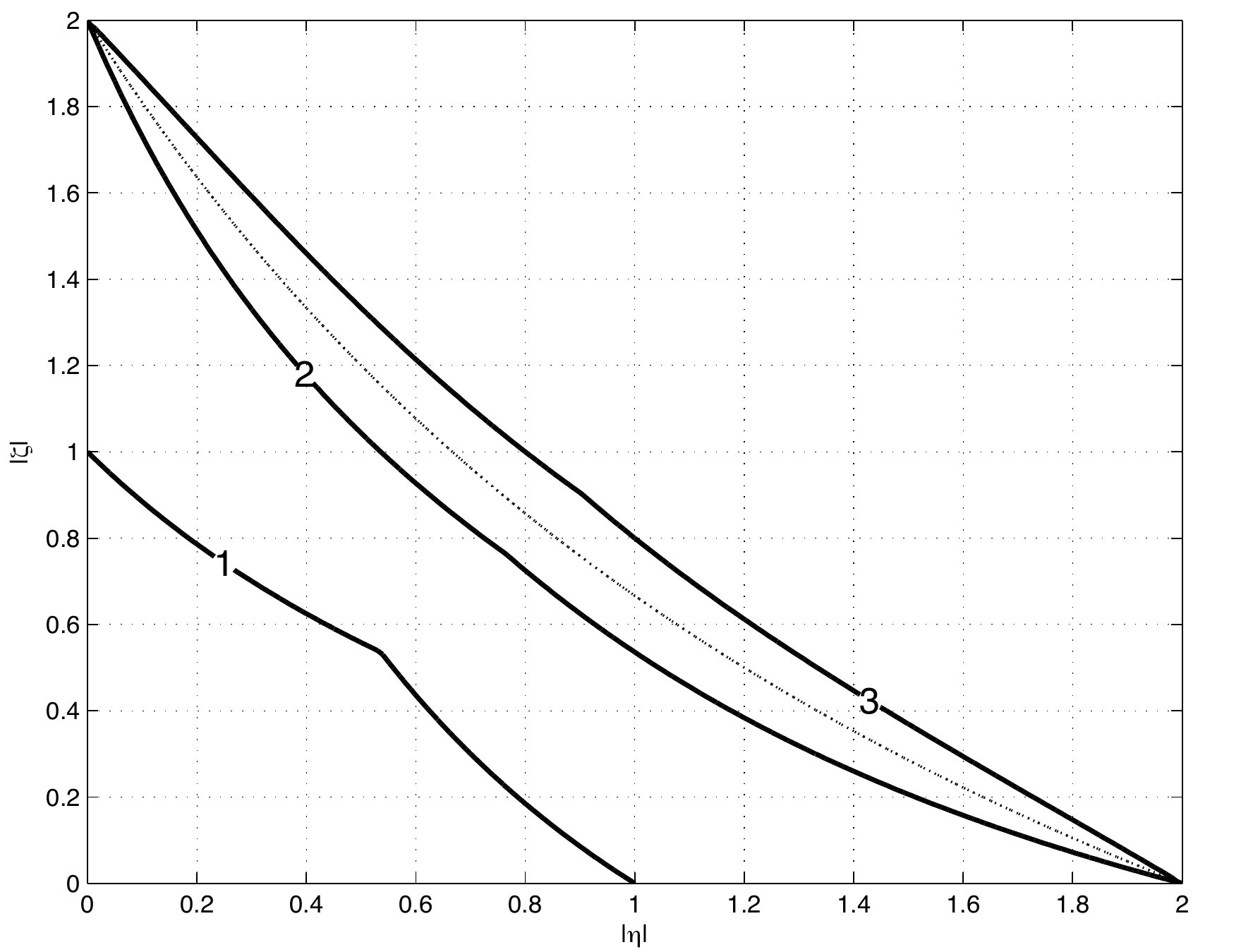}
   \end{center}
   \vspace{-3ex}
   \caption{\label{fig:HH}
   Sets of level $c$ of the function $\mathcal{H}_b$: cases $c=1,2,3$; the thin line is the curve $\zeta=\zeta(|\eta|)$.}
\end{figure}

%%%%%%%%%%%%%%%%%%%%%%%%%%%%%%%%%%%%%%%%%%%%%%%%%%%%%%%%%%%%%%%%%%%%%%%%%%

%%%%%%%%%%%%%%%%%%%%%%%%%%%%%%%%%%%%%%%%%%%%%%%%%%%%%%%%%%%%%%%%%%%%%%%%

\section{Preliminaries and Functionals}\label{sec:prelim}
\setcounter{equation}{0}

In this section we collect some preliminary results from \cite{amadori-corli-siam,ABCD,ABCD2}, focusing on the front tracking algorithm used to construct the approximate solutions. Moreover, we introduce some functionals needed to estimate the total variation of such solutions.

First, we recall some basic notions. System \eqref{eq:system} is strictly hyperbolic with two genuinely nonlinear characteristic fields (of family $1$ and $3$) and a linearly degenerate one of family $2$.
For $i=1,3$, the $i$-th right Lax curves through the point $\hat{U}=(\hat{v},\hat{u},\hat{\lambda})\in\Omega$ are
\begin{equation}\label{eq:lax13}
v \mapsto \left(v,\hat{u} + 2a(\hat{\lambda}) h(\eps_i),\hat{\lambda}\right)\,,\qquad v>0\,,
\end{equation}
where $\eps_i$ denotes the strength of an $i$-wave,
 \begin{gather}\label{eq:strengths}
\eps_1=\frac{1}{2}\log\left(\frac{v}{\hat{v}}\right),%=\frac{1}{2}\log\left(\frac{p}{\hat{p}}\right)\,,
\qquad \eps_3=\frac{1}{2}\log\left(\frac{\hat{v}}{v}\right),%=\frac{1}{2}\log\left(\frac{\hat{p}}{p}\right)\,,
%\eps_2 = 2\, \frac{a(\lambda)-a(\lambda_o)}{a(\lambda)+a(\lambda_o)}\,.
\end{gather}
and $h$ is the function defined by
\begin{equation}\label{eq:h}
h(\eps)= \begin{cases}
\eps& \mbox{ if } \eps \ge 0\,,\\
\sinh \eps& \mbox{ if } \eps < 0\,.
\end{cases}
\end{equation}
Rarefaction waves have positive strength, while shock waves have negative strength. The wave curve for the second characteristic field through $\hat{U}\in\Omega$ is given by
\begin{equation*}
\lambda\mapsto\left(\hat{v}\ds\frac{a^2(\lambda)}{a^2(\hat{\lambda})},
\hat{u},\lambda\right)\,,\qquad \lambda\in[0,1],
\end{equation*}
and the strength of a $2$-wave is
\begin{equation}\label{eq:2strength}
\delta = 2\, \frac{a(\lambda)-a(\hat{\lambda})}{a(\lambda)+a(\hat{\lambda})}\,.
\end{equation}

%\begin{enumerate}
%\item $\ds \tv_{}\left(\log(p^\nu_o)\right)\le \tv_{} \left(\log(p_o)\right)$, $\ds \tv_{}\left(u^\nu_o\right) \le \tv_{}\left(u_o \right)$;
%
%\item $\lim_{x\to-\infty} (v^\nu_o,u^\nu_o)(x)
%  =\lim_{x\to-\infty} (v_o,u_o)(x)$;
%
%\item $\|(v^\nu_o,u^\nu_o) - (v_o,u_o)\|_{\L1}\leq 1 /\nu$.
%\end{enumerate}
We solve the Riemann problems by means of some \emph{Pre-Riemann solvers}, which are introduced in the following proposition. We use the symbols `$L$' and `$I$' to denote Lax and Integral curves, respectively.
For $i=1,3$ and $\theta_i\in\{L,I\}$, we define the functions
\begin{equation}\label{eq:choice}
\Theta_i=
\begin{cases}
h  &\text{if $\theta_i=L$}\,,\\
\Id &\text{if $\theta_i=I$}\,.
\end{cases}
\end{equation}

\begin{proposition}[Pre-Riemann solvers]\label{prop:preRsolver}
Fix $\theta_i\in\{L,I\}$, for $i=1,3$. There exists a map $R_{\theta_1\!\theta_3}:\Omega\times\Omega\to \R\times ]-2,2[\times \R$ such that for any two states $U_-=(v_-,u_-,\lambda_-),U_+=(v_+,u_+,\lambda_+)\in\Omega$ we have
\begin{equation}\label{eq:Rsolver}
R_{\theta_1\!\theta_3}(U_-,U_+)=
(\eps_1,\delta,\eps_3)\,,
\end{equation}
where $\eps_1,\delta,\eps_3$ represent waves of family $1,2,3$, respectively, satisfying the following relations:
\begin{equation}\label{impo}
\begin{gathered}
\eps_3-\eps_1=\frac{1}{2}\log\left(\frac{p_+}{p_-}\right)\,, \qquad a_-\Theta_1(\eps_1)+a_+\Theta_3(\eps_3)=\frac{u_+-u_-}{2}\,, \qquad
\delta=2\,\frac{a_+-a_-}{a_++a_-}\,.
\end{gathered}
\end{equation}
We denoted $a_{\pm}=a(\lambda_{\pm})$, $p_{\pm}=p(v_\pm,\lambda_{\pm})$.
\end{proposition}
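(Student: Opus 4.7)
The proof is essentially the solution of a scalar algebraic equation, using the third relation in \eqref{impo} to read off $\delta$ and the first to eliminate $\eps_3$ in terms of $\eps_1$. My plan is as follows.

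First, I would observe that the third relation in \eqref{impo} defines $\delta$ directly from the data: since $a(\lambda)>0$ on $[0,1]$, one has $a_\pm>0$, and hence $\delta=2(a_+-a_-)/(a_++a_-)\in\,]-2,2[$ as required by the codomain. Second, since $p_\pm>0$ (because $v_\pm>0$ and $a^2(\lambda_\pm)>0$), the quantity $c:=\frac{1}{2}\log(p_+/p_-)$ is a well-defined real number, and the first relation in \eqref{impo} is equivalent to the explicit substitution $\eps_3=\eps_1+c$. Thus everything reduces to finding $\eps_1\in\R$ such that
\begin{equation*}
\Phi(\eps_1):=a_-\Theta_1(\eps_1)+a_+\Theta_3(\eps_1+c)=\frac{u_+-u_-}{2},
\end{equation*}
after which $\eps_3$ and $\delta$ are determined.

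The key step is then to show that $\Phi:\R\to\R$ is a continuous bijection. This follows from the corresponding properties of the two building blocks $\Theta_i\in\{h,\Id\}$ defined in \eqref{eq:choice}. The identity is trivially a continuous, strictly increasing bijection of $\R$. For $h$ given by \eqref{eq:h} I would check: continuity at $\eps=0$ (both branches give $0$); strict monotonicity on each branch (because $\sinh$ is strictly increasing on $]-\infty,0[$ and the identity is strictly increasing on $[0,+\infty[$, and the two branches agree at $0$); and surjectivity onto $\R$ (the negative branch covers $]-\infty,0[$ via $\sinh$ and the nonnegative branch covers $[0,+\infty[$). Since $a_-,a_+>0$, it follows that $\Phi$ is continuous and strictly increasing with $\lim_{\eps_1\to\pm\infty}\Phi(\eps_1)=\pm\infty$, hence by the intermediate value theorem it has exactly one real preimage of $(u_+-u_-)/2$. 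This uniquely defines $\eps_1$, and setting $\eps_3=\eps_1+c$ yields the triple $(\eps_1,\delta,\eps_3)$ satisfying \eqref{impo}; the assignment $(U_-,U_+)\mapsto(\eps_1,\delta,\eps_3)$ then defines the required map $R_{\theta_1\theta_3}$.

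There is no real obstacle here; the only point that requires mild care is verifying that $h$ is a homeomorphism of $\R$ onto itself across the kink at $\eps=0$ (so that $\Theta_1$ and $\Theta_3$ genuinely inherit the monotonicity and surjectivity used above). Once this is in place, the result is uniform in the four choices of $(\theta_1,\theta_3)\in\{L,I\}^2$, since the argument uses only the common qualitative features of $h$ and $\Id$.
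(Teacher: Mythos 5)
Your proposal is correct and is the natural argument: reduce the system \eqref{impo} to a single scalar equation in $\eps_1$ by eliminating $\eps_3=\eps_1+\frac{1}{2}\log(p_+/p_-)$ and reading $\delta$ directly off the data, then observe that $a_-\Theta_1(\eps_1)+a_+\Theta_3(\eps_1+c)$ is a continuous, strictly increasing surjection of $\R$ (since each $\Theta_i\in\{h,\Id\}$ is, with $h$ in \eqref{eq:h} matching up continuously at $0$, and $a_\pm>0$), so the intermediate value theorem yields a unique $\eps_1$. The paper itself defers the proof to Proposition~3.1 of the reference \cite{ABCD2}, and the argument there is of the same nature, so you have not taken a genuinely different route.
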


We refer to \cite[Proposition 3.1]{ABCD2} for a proof of the previous result. In a few words, Proposition~\ref{prop:preRsolver} states the existence of four Pre-Riemann solvers $\RLL$, $\RII$, $\RLI$ and $\RIL$, that prescribe how to solve a Riemann problem with $i$-waves taken along (Lax or integral) $i$-curves, for $i=1,3$. In particular, $\RLL$ is the solver of \cite{amadori-corli-siam,ABCD} that employs Lax curves.

We use a front tracking algorithm \cite{Bressanbook} to build up the approximate solutions to \eqref{eq:system}-\eqref{eq:init-data-o},\eqref{eq:init-data}. The first step in the construction consists in taking a sequence $(v^\nu_o,u^\nu_o)_{\nu\in\N}$ of piecewise-constant functions with a finite number of jumps, that approximate the initial data \eqref{eq:init-data} in the sense of \cite[Section 4]{ABCD2}. We choose two parameters $\sigma=\sigma_\nu>0$, $\rho=\rho_\nu>0$ and proceed as follows. At time $t=0$ we apply the solver $\RLL$ at each jump of the approximated initial data; we split rarefactions into a finite number of rarefaction shocks, each of size $\le\sigma$, whose speed equals the characteristic speed at the right state (see \cite[Section 4]{ABCD2} for more details). Then, an approximate solution $(v^\nu,u^\nu,\lambda)(\cdot,t)$ is defined until the first time two wave fronts interact and a new Riemann problem arises. In the case of interaction between two fronts of families $1$ or $3$, we again use $\RLL$ and adopt the following strategy to approximate outgoing rarefaction waves: they are prolonged as a single discontinuity if they already existed before the interaction, otherwise they are split into a fan of waves as before. On the other hand, when solving an interaction of a wave of family $1$ or $3$ with a $2$-wave $\delta$, we possibly make use of a method that attaches certain reflected waves to $\delta$; the outcome is a wave of the same family of the incoming one and a stationary \emph{composite wave}, which is defined below.

\begin{definition}[Composite wave \cite{ABCD2}]\label{def:compositewave}
Consider two states $U_-=(v_-,u_-,\lambda_-)$ and $U_+=(v_+,u_+,\lambda_+)$ of $\Omega$, with $\lambda_-\neq\lambda_+$. The composite wave $\delta_0=(\delta_0^1,\delta,\delta_0^3)$ connecting $U_-$ to $U_+$ is the stationary wave defined by $\delta_0=\RII(U_-,U_+)$. We write $|\delta_0|=|\delta_0^1|+|\delta_0^3|$.
\end{definition}

%%%%%%%%%%%%%%%%%%%%%%%%%%%%%%%%%%%%%%%%%%%%%%%%%%%%%%%%%%%%%%%%%%%%%%%

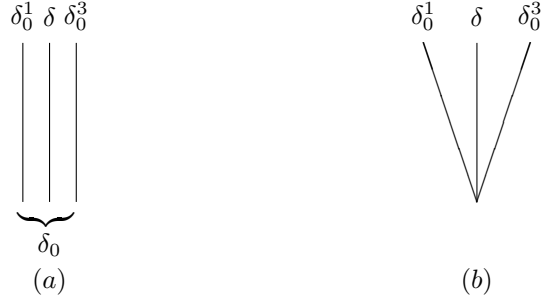
\begin{figure}[htbp]
\begin{picture}(100,110)(-120,10)
\setlength{\unitlength}{1pt}

\put(0,0){
\put(10,40){\line(0,1){60}}
\put(20,40){\line(0,1){60}}
\put(30,40){\line(0,1){60}}
\put(10,110){\makebox(0,0){$\delta_0^1$}}
\put(20,110){\makebox(0,0){$\delta$}}
\put(30,110){\makebox(0,0){$\delta_0^3$}}

\put(8,37){$\underbrace{\phantom{aaaa}}$}
\put(20,24){\makebox(0,0){$\delta_0$}}

\put(20,10){\makebox(0,0){$(a)$}}
}

\put(220,0){
\put(-40,40){\line(1,3){20}} \put(-40,40){\line(0,1){60}}
\put(-40,40){\line(-1,3){20}}
\put(-60,110){\makebox(0,0){$\delta_0^1$}}
\put(-40,110){\makebox(0,0){$\delta$}}
\put(-20,110){\makebox(0,0){$\delta_0^3$}}

\put(-40,10){\makebox(0,0){$(b)$}}
}

\end{picture}
\caption{\label{fig:1}{A composite wave $\delta_0$ in the $(x,t)$-plane: in $(a)$ it is drawn as three parallel close lines, while the auxiliary picture $(b)$ is used to determine the states in the interactions.}}
\end{figure}

%%%%%%%%%%%%%%%%%%%%%%%%%%%%%%%%%%%%%%%%%%%%%%%%%%%%%%%%%%%%%%%%%%%%%%

\noindent Notice that in Definition~\ref{def:compositewave} the waves $\delta_0^1,\delta_0^3$ are given zero speed and $\delta_0$ reduces to a $2$-wave as long as $\delta_0^1=\delta_0^3=0$. Hence, instead of dealing with $2$-waves, we are left with composite waves belonging to a fictitious $0$-family.

When a wave front of family $1$ or $3$ with strength $\delta_i$ interacts with a composite wave $\delta_0=(\delta_0^1,\delta,\delta_0^3)$ at $t>0$, we exploit two different procedures to solve the emerging Riemann problem of states $U_-,U_+$. We indicate by $I_i(\eps)(U)$ the integral curves of family $i=1,3$, parametrized by $\eps$ and of origin $U\in \Omega$; then, we define $\widetilde{U}_-=I_1(\delta_0^1)(U_-)$ and
$\widetilde{U}_+=I_3(-\delta_0^3)(U_+)$. Here follows a description of the Riemann solvers; for a proof of the following facts see \cite[Proposition 3.7]{ABCD2}.
    \begin{enumerate}
\item \emph{Accurate Riemann solver}. If $|\delta_i|\ge \rho$, then the solution is formed by waves $\eps_1,\eps_0,\eps_3$, where $(\eps_1,\delta,\eps_3)=\RLL(\widetilde{U}_-,\widetilde{U}_+)$ and $\eps_0=\delta_0$.
\item \emph{Simplified Riemann solver}. If $|\delta_i|<\rho$, then we distinguish case $i=1$ and $i=3$:
\begin{itemize}
\item[i)] for $i=1$, the solution is formed by waves $\eps_1,\eps_0$ such that $(\eps_1,\delta,\eps_3)=\RLI(\widetilde{U}_-,\widetilde{U}_+)$ and $\eps_0=(\delta_0^1,\delta,\delta_0^3+\eps_3)$;
\item[ii)] for $i=3$, the solution is formed by waves $\eps_0,\eps_3$ such that $(\eps_1,\delta,\eps_3)=\RIL(\widetilde{U}_-,\widetilde{U}_+)$ and $\eps_0=(\delta_0^1+\eps_1,\delta,\delta_0^3)$.
\end{itemize}
\end{enumerate}
%See Figure \ref{fig:interactions} for a picture of an interaction case in the $(x,t)$-plane.
We emphasize that $\Theta_i=h$ in all cases and the following relations are verified:
\begin{gather}
\eps_3-\eps_1=\begin{cases}
-\delta_1 &\quad \text{if $i=1$},\\
\delta_3 &\quad \text{if $i=3$},
\end{cases}
\qquad a_- \Theta_1(\eps_1)+a_+ \Theta_3(\eps_3)=\begin{cases}
a_+ \Theta_1(\delta_1) &\quad \text{if $i=1$},\\
a_-\Theta_3(\delta_3) &\quad \text{if $i=3$},
\end{cases}\label{eq:rels}\\
\sgn\,\eps_i = \sgn\,\delta_i,
\qquad
\sgn\,\eps_j  = \begin{cases}
\sgn\,\delta\cdot \sgn\,\delta_i &\hbox{ if } i=1,
\\
-\sgn\,\delta\cdot \sgn\,\delta_i &\hbox{ if } i=3.
\end{cases}\label{eq:sign}
\end{gather}

\bigskip

We recall the interaction estimates \cite[Lemma 5.2 and 5.4]{ABCD2}. For $i=1,3$, we denote by $\eps_i$ the strength of the transmitted wave and by $\eps_j$, $j=1,3,\,j\neq i$, the strength of the reflected one (even in case of interaction with a composite wave treated by the Simplified solver).

\begin{lemma}[Interaction estimates]\label{lem:intest}
For the interaction between two waves at time $t>0$ we have the following; let $i,j=1,3$, $j\neq i$.
\begin{enumerate}
\item Assume that an $i$-wave $\delta_i$ interacts with a composite wave $\delta_0=(\delta_0^1,\delta,\delta_0^3)$. If $|\delta_i|\ge \rho$, we have
\begin{equation}\label{eq:intest1}
|\eps_i-\delta_i|=|\eps_j|\le
\frac{1}{2}|\delta_i\delta| \qquad \text{and} \qquad |\eps_0-\delta_0|=0,
\end{equation}
while, if $|\delta_i|<\rho$, it holds
\begin{equation}\label{eq:intest2}
|\eps_i-\delta_i|=|\eps_0-\delta_0|=|\eps_j|\le\begin{cases}
\ds \frac{C_o}{2}|\delta_i\delta| & \text{if $\delta_i<0$ and either ($i=1$, $\delta>0$) or ($i=3$, $\delta<0$)},\\[7pt]
\ds \frac{1}{2}|\delta_i\delta| & \text{otherwise},
\end{cases}
\end{equation}
where
\begin{equation}\label{eq:C_o}
C_o=C_o(\rho)=\frac{\sinh \rho}{\rho}\,.
\end{equation}
\item If two waves $\alpha_i$ and $\beta_j$ of different families interact with each other, then,
\begin{equation}\label{eq:intest3}
|\eps_i|=|\alpha_i|\,, \qquad |\eps_j|=|\beta_j|\,.
\end{equation}
\item Assume that two waves $\alpha_i$ and $\beta_i$ of the same family interact. If both $\alpha_i$ and $\beta_i$ are shocks, then the reflected wave $\eps_j$ is a rarefaction, while the transmitted wave $\eps_i$ is a shock and satisfies
\begin{equation}\label{eq:intest4}
|\eps_i|>\max \{|\alpha_i|,|\beta_i|\}\,.
\end{equation}
If $\alpha_i$ and $\beta_i$ have different signs, e.g. $\alpha_i<0<\beta_i$, then the reflected wave is a shock, both the amounts of shocks and rarefactions of the $i$-th family decrease across the interaction and one has
    \begin{align}\label{eq:chi_def}
    |\eps_j| & \le c(\alpha_i) \cdot \min\{|\alpha_i|,|\beta_i|\}\,,\qquad c(z) = \frac{\cosh z -1}{\cosh z+1}\,.
    \end{align}
\end{enumerate}
\end{lemma}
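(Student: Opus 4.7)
The plan is to handle the three parts by systematically reducing each interaction to the algebraic identities supplied by Proposition~\ref{prop:preRsolver}, combined with analytic properties of the function $h$ defined in \eqref{eq:h}. Part (1) is where the work is concentrated; parts (2) and (3) follow from essentially standard considerations, and in (3) the middle $2$-component is absent so we are really dealing with the isothermal $p$-system.

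For part (1), I would fix $i=1$ and recover $i=3$ by symmetry. The equality $|\eps_i - \delta_i| = |\eps_j|$ is immediate from the first identity in \eqref{eq:rels}, which gives $\eps_3 = \eps_1 - \delta_1$. The equality $|\eps_0 - \delta_0| = |\eps_j|$ is by construction: it is $0$ for the Accurate solver (which leaves $\delta_0$ untouched), and equal to $|\eps_j|$ for the Simplified solver (since the reflected wave is absorbed into the composite). To bound $|\eps_j|$, I substitute $\eps_3 = \eps_1 - \delta_1$ into the second identity of \eqref{eq:rels}, getting a scalar equation in $\eps_1$:
\begin{equation*}
a_- h(\eps_1) + a_+\bigl[h(\eps_1 - \delta_1) - h(\delta_1)\bigr] = 0.
\end{equation*}
Evaluating the left-hand side at $\eps_1 = \delta_1$ yields $(a_- - a_+) h(\delta_1) = -\tfrac{1}{2}(a_-+a_+)\,\delta\, h(\delta_1)$, while its derivative in $\eps_1$ is bounded below by $a_- + a_+$ because $h'\ge 1$; the mean value theorem then gives an estimate of the form $|\eps_1 - \delta_1| \le \tfrac{1}{2}|\delta|\,|h(\delta_1)|$. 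For rarefactions this is exactly $\tfrac{1}{2}|\delta_1\delta|$; for shocks the same conclusion requires a sharper monotonicity argument exploiting that, by \eqref{eq:sign}, $\eps_j$ has a prescribed sign relative to $\delta$, so the ratio $h(\delta_1)/\delta_1$ can be replaced by $1$ in the Accurate case. In the Simplified solver, the mixed Lax/integral form forces the ratio $h(\delta_1)/\delta_1$ to appear in the bound; since $|\delta_1| < \rho$ one has $h(\delta_1)/\delta_1 \le \sinh\rho/\rho = C_o$, and this is precisely the dichotomy encoded in \eqref{eq:intest2}. Tracking the sign conditions in \eqref{eq:sign} explains why the $C_o$ factor only appears in the indicated subcase.

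For part (2), observe that two incoming waves $\alpha_i,\beta_j$ with $i\ne j$ already constitute a valid solution of the Riemann problem between their extreme states (with zero reflected components). By Proposition~\ref{prop:preRsolver} applied to that same Riemann problem, the outgoing waves must coincide in strength with the incoming ones, yielding \eqref{eq:intest3}.

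For part (3), with no phase wave present, I would apply $\RLL$ and read the strengths off the two scalar relations in \eqref{impo}. The shock-shock bound \eqref{eq:intest4} follows from the strict superadditivity of $|h|$ on the negative axis (i.e., $|\sinh(x+y)|>|\sinh x|+|\sinh y|$ for $x,y<0$) together with conservation of velocity. For the opposite-sign case $\alpha_i<0<\beta_i$, the estimate with $c(z)=(\cosh z-1)/(\cosh z+1)$ arises from a direct computation using the addition formula $\sinh(x+y) = \sinh x\cosh y + \cosh x\sinh y$, evaluated at a shock strength plus a rarefaction strength and combined with the velocity-matching identity. The main obstacle throughout is the bookkeeping of signs in \eqref{eq:sign} and the sharp constants in part (1); once these are carefully tracked, every bound reduces to a one-line manipulation of the defining identities.
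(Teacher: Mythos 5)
The paper does not prove this lemma at all: it is quoted verbatim from \cite{ABCD2} (``We recall the interaction estimates [Lemma~5.2 and~5.4]''), so there is no in-paper argument to compare against, and your proposal must stand on its own.

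Parts (2) and (3) are essentially right: (2) does follow from uniqueness of the Riemann solver, and (3) is the classical isothermal $p$-system computation with $a_-=a_+$, where the superadditivity of $\sinh$ on the half-line and the $\sinh$-addition formula give \eqref{eq:intest4} and \eqref{eq:chi_def}.

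Part (1) has a real gap. The reduction to the scalar equation
$a_- h(\eps_1)+a_+\bigl[h(\eps_1-\delta_1)-h(\delta_1)\bigr]=0$
and the evaluation $\phi(\delta_1)=-\tfrac12(a_-+a_+)\,\delta\, h(\delta_1)$ are correct, but the mean value theorem with $h'\ge 1$ only yields
$|\eps_1-\delta_1|\le \tfrac12|\delta|\,|h(\delta_1)|$.
For a shock this is $\tfrac12|\delta|\,|\sinh\delta_1|$, which is \emph{strictly larger} than the claimed $\tfrac12|\delta_1\delta|$, so the estimate as you set it up does not close. You acknowledge this and say a ``sharper monotonicity argument'' replaces $h(\delta_1)/\delta_1$ by $1$ in the Accurate case, but that replacement is not justified — and a crude lower bound on $\phi'$ such as $\phi'\ge a_-\cosh\delta_1+a_+$ still fails to give $\tfrac12|\delta_1\delta|$ when the ratio $a_+/a_-$ is moderately large (one can check numerically that the conclusion nonetheless holds, but not via this crude route). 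The genuine content of \eqref{eq:intest1} for shocks is an inequality of the shape
$|\sinh\delta_1|\,(a_-+a_+)\le |\delta_1|\bigl(a_\mp\cosh\delta_1+a_\pm\bigr)+\text{(sign-dependent correction)}$
which requires expanding $\sinh(\eps_3+\delta_1)$ by the addition formula and exploiting that the transmitted shock strengthens or weakens depending on $\sgn\delta$; it is not a one-line MVT estimate. Likewise, your explanation of the $C_o$ dichotomy is off: the factor $C_o$ does not appear ``because the Simplified solver uses an integral curve''; it appears only in the two sign configurations singled out in \eqref{eq:intest2}, namely exactly when \eqref{eq:sign} forces the reflected wave $\eps_j$ to be a \emph{shock} (so that the cancellation one gets from $h(\eps_j)=\eps_j$ when $\eps_j>0$ is lost). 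In the remaining Simplified sub-cases the reflected wave is a rarefaction and the sharper $\tfrac12$-bound survives; that distinction is invisible in your sketch, which predicts $C_o$ whenever an integral curve is involved. So the core of part (1) — the shock case and the precise placement of $C_o$ — needs a genuinely different argument from what is outlined.
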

\noindent About the term in $C_o$ in \eqref{eq:C_o}, we notice that $C_o(\rho)>1$ for any $\rho>0$ and $C_o(\rho)\to 1^+$ for $\rho\to 0^+$.

\bigskip

Now, we introduce some functionals needed to prove the boundedness of the total variation of the approximate solutions. Using indices $\ell,m,r$ to refer to waves in $\mathcal{L}, \mathcal{M}, \mathcal{R}$, respectively, we define
\begin{equation*}
L^{\ell,m,r}=\sum_{\genfrac{}{}{0pt}{}{i=1,3,\,\delta_i>0}{\delta_i\in\mathcal{L},\mathcal{M},\mathcal{R}}}|\delta_i| + \xi \sum_{\genfrac{}{}{0pt}{}{i=1,3,\,\delta_i<0}{{\delta_i\in\mathcal{L},\mathcal{M},\mathcal{R}}}}|\delta_i|\,,
\end{equation*}
where $\xi>1$ is a parameter to be determined. We denote
$$
L= L^{\ell}+L^m+L^r\,,\qquad L^0=|\eta_0|+|\zeta_0|
$$
and
\begin{equation*}
\bar{L}  =  \bar{L}^\ell+\bar{L}^m+\bar{L}^r= \sum_{\genfrac{}{}{0pt}{}{i=1,3}{\delta_i\in\mathcal{L}}}|\delta_i|+\sum_{\genfrac{}{}{0pt}{}{i=1,3}{\delta_i\in\mathcal{M}}}|\delta_i|+\sum_{\genfrac{}{}{0pt}{}{i=1,3}{\delta_i\in\mathcal{R}}}|\delta_i|= \frac 12 \tv\left(\log p(t,\cdot)\right)-|\eta_0|-|\zeta_0|\,.
\end{equation*}
In the following sections we also introduce the interaction potential
\begin{equation}\label{eq:Q}
Q=Q^\ell+Q^m+Q^r
\end{equation}
and specify $Q^{\ell,m,r}$, that differ in the bubble case and in the increasing-pressure case. The resulting functional
\begin{equation}\label{eq:F}
F=L+L^0+Q
\end{equation}
is equivalent to the total variation of the approximate solutions and has an asymmetrical character in a double sense: firstly, it depends on the phases and, secondly, shocks and rarefactions play a different role. As for the latter, not only shocks are weighted by $\xi$ (a procedure of \cite{AmadoriGuerra01} also exploited in \cite{ABCD,ABCD2}), but the contributions of certain shock waves from $Q$ are dropped. This is due to the fact that in the interaction of some shocks with a $0$-wave we have that $\Delta L$ is already nonpositive; more precisely, from \eqref{eq:rels} it follows that such unnecessary waves are either $3$-shocks interacting with a $2$-wave $\delta>0$ or $1$-shocks interacting with a $2$-wave $\delta<0$. On the contrary, rarefaction waves are always counted in $Q$ if they approach a phase wave. As shown in Figure~\ref{fig:tv}, the total variation of the solutions increases going towards the more liquid regions.

%%%%%%%%%%%%%%%%%%%%%%%%%%%%%%%%%%%%%%%%%%%%%%%%%%%%%%%%%%%%%%%%%%%%%%%%%%

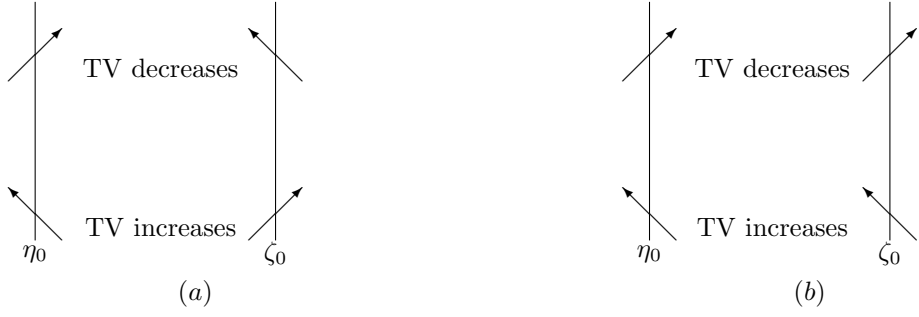
\begin{figure}[htbp]
\begin{picture}(100,110)(-120,10)
\setlength{\unitlength}{1pt}

\put(-80,0){
\put(10,30){\line(0,1){90}}
\put(100,30){\line(0,1){90}}

\put(0, 90){\vector(1,1){20}}
\put(110, 90){\vector(-1,1){20}}
\put(20, 30){\vector(-1,1){20}}
\put(90, 30){\vector(1,1){20}}

\put(10,25){\makebox(0,0){$\eta_0$}}
\put(100,25){\makebox(0,0){$\zeta_0$}}

\put(57,95){\makebox(0,0){$\tv$ decreases}}
%\put(138,95){\makebox(0,0){$\tv$ decrease}}
\put(57,35){\makebox(0,0){$\tv$ increases}}
%\put(135,35){\makebox(0,0){$\tv$ increase}}

\put(70,10){\makebox(0,0){$(a)$}}
}

\put(150,0){
\put(10,30){\line(0,1){90}}
\put(100,30){\line(0,1){90}}

\put(0, 90){\vector(1,1){20}}
\put(90, 90){\vector(1,1){20}}
\put(20, 30){\vector(-1,1){20}}
\put(110, 30){\vector(-1,1){20}}

\put(10,25){\makebox(0,0){$\eta_0$}}
\put(100,25){\makebox(0,0){$\zeta_0$}}

\put(56,95){\makebox(0,0){$\tv$ decreases}}
%\put(135,95){\makebox(0,0){$\tv$ decrease}}
\put(56,35){\makebox(0,0){$\tv$ increases}}
%\put(135,35){\makebox(0,0){$\tv$ increase}}

\put(70,10){\makebox(0,0){$(b)$}}
}
\end{picture}
%\vspace{-4ex}
\caption{\label{fig:tv}{How the total variation varies for interactions with the phase waves in the bubble case $(a)$ and in the increasing-pressure case $(b)$.}}
\end{figure}

%%%%%%%%%%%%%%%%%%%%%%%%%%%%%%%%%%%%%%%%%%%%%%%%%%%%%%%%%%%%%%%%%%%%%%%%%%

In the potentials $Q^{\ell,m,r}$ we insert some positive weights $K_{\eta,\zeta}^{\ell,m,r}$ that keep track of the regions of provenience ($\mathcal{L},\mathcal{M},\mathcal{R}$) of the approaching waves and of the $0$-wave approached ($\eta_0$ or $\zeta_0$), see Figure~\ref{fig:paramK}.

%%%%%%%%%%%%%%%%%%%%%%%%%%%%%%%%%%%%%%%%%%%%%%%%%%%%%%%%%%%%%%%%%%%%%%%%%%

\begin{figure}[htbp]
\begin{picture}(100,110)(-120,10)
\setlength{\unitlength}{1pt}

\put(25,0){
\put(10,30){\line(0,1){90}}
\put(150,30){\line(0,1){90}}
\put(-75, 54){\line(1,1){50}}
\put(-50, 46){\line(1,1){50}}
\put(71, 46){\line(-1,1){50}}
\put(91, 46){\line(1,1){50}}
\put(210, 46){\line(-1,1){50}}
\put(235, 54){\line(-1,1){50}}

%\put(-40, 100){\line(1,1){50}}
%\put(110, 100){\line(-1,1){50}}

\put(-40,70){\makebox(0,0){$K_\eta^\ell$}}
\put(-60,85){\makebox(0,0){$K_\zeta^\ell$}}
\put(55,80){\makebox(0,0){$K_\eta^m$}}
\put(105,80){\makebox(0,0){$K_\zeta^m$}}
\put(200,70){\makebox(0,0){$K_\zeta^r$}}
\put(220,85){\makebox(0,0){$K_\eta^r$}}
\put(10,25){\makebox(0,0){$\eta_0$}}
\put(150,25){\makebox(0,0){$\zeta_0$}}
}
\end{picture}
\vspace{-4ex}
\caption{\label{fig:paramK}{The parameters $K_{\eta,\zeta}^{\ell,m,r}$.}}
\end{figure}
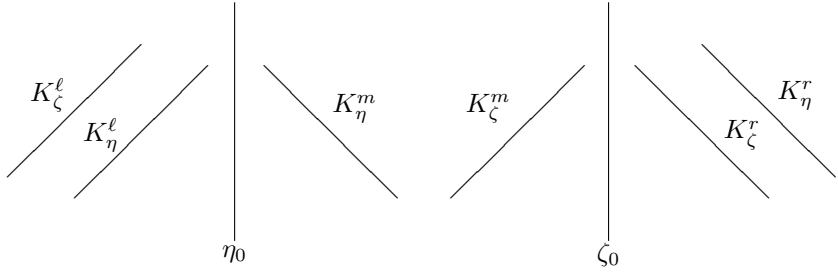

%%%%%%%%%%%%%%%%%%%%%%%%%%%%%%%%%%%%%%%%%%%%%%%%%%%%%%%%%%%%%%%%%%%%%%%%%%

In the next sections we show that the functional $F$ decreases across any interaction, under suitable conditions on the parameters $\xi$, $K_{\eta,\zeta}^{\ell,m,r}$ and $\rho$, which differ in the bubble case and in the increasing-pressure case. In the proof we fix $m_o>0$ and assume that, for $i=1,3$, the size $\delta_i$ of any $i$-shock satisfies
\begin{equation}\label{rogna}
|\delta_i|\le m_o\,.
\end{equation}
Such parameter is necessary in our proof to relate the total variation of the initial data $u_o,p_o$ to the sizes of the phase waves.

%%%%%%%%%%%%%%%%%%%%%%%%%%%%%%%%%%%%%%%%%%%%%%%%%%%%%%%%%%%%%%%%%%%%%%%%

\section{The bubble case}\label{sec:interbubble}
\setcounter{equation}{0}

In this section, we prove the decreasing of the functional $F$ in \eqref{eq:F} in the bubble case, where the phase-dependent interaction potentials are defined by
\begin{align*}
Q^\ell& =\left(K_\eta^\ell|\eta|+ K_\zeta^\ell|\zeta|\right)\sum_{\genfrac{}{}{0pt}{}{\delta_3>0}{\delta_3\in\mathcal{L}}}|\delta_3|+\xi K_\zeta^\ell\sum_{\genfrac{}{}{0pt}{}{\delta_3<0}{\delta_3\in\mathcal{L}}}|\delta_3\zeta|\,,	 \\
Q^{m}&=K_\eta^m\Bigl(\sum_{\genfrac{}{}{0pt}{}{\delta_1>0}{\delta_1\in\mathcal{M}}}|\delta_1\eta|+\xi \sum_{\genfrac{}{}{0pt}{}{\delta_1<0}{\delta_1\in\mathcal{M}}}|\delta_1\eta|\Bigr) + K_\zeta^m \Bigl(\sum_{\genfrac{}{}{0pt}{}{\delta_3>0}{\delta_3\in\mathcal{M}}}|\delta_3 \zeta|+\xi \sum_{\genfrac{}{}{0pt}{}{\delta_3<0}{\delta_3\in\mathcal{M}}}|\delta_3 \zeta|\Bigr)\,,\\
Q^r &= \left(K_\eta^r|\eta|+ K_\zeta^r|\zeta|\right)\sum_{\genfrac{}{}{0pt}{}{\delta_1>0}{\delta_1\in\mathcal{R}}}|\delta_1| +\xi K_\eta^r\sum_{\genfrac{}{}{0pt}{}{\delta_1<0}{\delta_1\in\mathcal{R}}}|\delta_1\eta|\,.
\end{align*}
As previously mentioned, the interaction potential $Q$ in \eqref{eq:Q} lacks certain shock waves: precisely, $3$-shocks interacting with $\eta_0$ and $1$-shocks interacting with $\zeta_0$, see Figure~\ref{fig:bubbleQ}. In the next two propositions we list the conditions on the parameters $\xi$, $K_{\eta,\zeta}^{\ell,m,r}$ and $\rho$ needed for the decrease of $F$.

%%%%%%%%%%%%%%%%%%%%%%%%%%%%%%%%%%%%%%%%%%%%%%%%%%%%%%%%%%%%%%%%%%%%%%%%%%

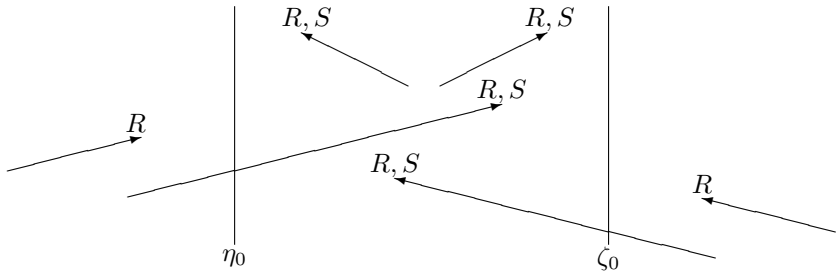
\begin{figure}[htbp]
\begin{picture}(100,110)(-120,10)
\setlength{\unitlength}{1pt}

\put(25,0){
\put(10,30){\line(0,1){90}}%eta0
\put(150,30){\line(0,1){90}}%zeta0

\put(-75, 58){\vector(4,1){50}}%L
\put(-30, 48){\vector(4,1){140}}%L

\put(75, 90){\vector(-2,1){40}}%M
\put(87, 90){\vector(2,1){40}}%M

\put(190, 25){\vector(-4,1){120}}%R
\put(235, 35){\vector(-4,1){50}}%R

\put(-27,76){\makebox(0,0){$R$}}%L
\put(37,115){\makebox(0,0){$R,S$}}%M
\put(128,115){\makebox(0,0){$R,S$}}%M
\put(110,88){\makebox(0,0){$R,S$}}%M
\put(70,60){\makebox(0,0){$R,S$}}%M
\put(185,53){\makebox(0,0){$R$}}

\put(10,25){\makebox(0,0){$\eta_0$}}
\put(150,25){\makebox(0,0){$\zeta_0$}}
}
\end{picture}
\vspace{-4ex}
\caption{\label{fig:bubbleQ}{The waves considered in $Q$ for the bubble case.}}
\end{figure}

%%%%%%%%%%%%%%%%%%%%%%%%%%%%%%%%%%%%%%%%%%%%%%%%%%%%%%%%%%%%%%%%%%%%%%%%%%

\begin{proposition}\label{prop:list1}
    Assume that at time $t>0$ a wave $\delta_i$, $i=1,3$, interacts with one of
    the composite waves $\eta_0$ or $\zeta_0$. Then, $\Delta F(t)\le 0$ provided
    that
    \begin{gather}
	\xi\ge 1, \qquad
	K_\zeta^m,K_\eta^m\geq1, \qquad
	\frac{\xi-1}{2}\le K_\zeta^r,K_\eta^\ell, \qquad
	K_\eta^m\le K_\eta^r, \qquad K_\zeta^m\le K_\zeta^\ell,\label{eq:composite1}\\
	 1+K_\eta^m\frac{|\eta|}{2}-K_\zeta^m\le 0,\qquad
	 1+K_\zeta^m\frac{|\zeta|}{2}-K_\eta^m\le 0,\label{eq:composite2}\\
	 C_{o}(\rho)\le\frac{2\xi}{\xi+1}\min\{K_\zeta^m,K_\eta^m\}.\label{eq:composite3}
    \end{gather}
\end{proposition}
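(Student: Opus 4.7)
The plan is to enumerate every possible interaction between a travelling wave $\delta_i$ ($i\in\{1,3\}$) and either composite wave $\eta_0$ or $\zeta_0$, and to verify $\Delta F(t)\le 0$ in each case; the bounds \eqref{eq:composite1}--\eqref{eq:composite3} will emerge as exactly what is needed. Since in the bubble case $\eta>0$ and $\zeta<0$, by Lemma~\ref{lem:intest}(1) the amplifying factor $C_o$ appears only for $1$-shocks hitting $\eta_0$ and for $3$-shocks hitting $\zeta_0$; all other reflected-wave bounds retain the sharper constant $\frac{1}{2}|\delta_i\delta|$. For each interaction Proposition~\ref{prop:preRsolver} and the sign rule \eqref{eq:sign} fix the families and signs of the transmitted wave $\eps_i$ and of the reflected wave $\eps_j$, relation \eqref{eq:rels} yields $|\eps_i|=|\delta_i|\pm|\eps_j|$ according to the sign pattern, and \eqref{eq:intest1}--\eqref{eq:intest2} quantify $|\eps_j|$.

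For each subcase I would compute $\Delta L$, $\Delta L^0$ and $\Delta Q$ separately. In the accurate solver, \eqref{eq:intest1} gives $\Delta L^0=0$, and both outgoing waves join $L$ and $Q$ with the appropriate regional weights $K^{\ell,m,r}_{\eta,\zeta}$. In the simplified solver the reflected wave is absorbed into a stationary component of the composite wave, hence leaves $L$ entirely; however $L^0$ grows by at most $|\eps_j|$ (triangle inequality applied to \eqref{eq:intest2}), while $Q$ sees only the removal of the original $\delta_i$-contribution. The accounting of $\Delta Q$ must also respect the deliberate omission in $Q^\ell,Q^r$ of $3$-shocks approaching $\eta_0$ and of $1$-shocks approaching $\zeta_0$; in those interactions the mere rearrangement of shocks and rarefactions already gives $\Delta L\le 0$ once $\xi\ge 1$, so no $Q$-compensation is needed.

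The subcases naturally cluster into two patterns. When $\eps_j$ has sign opposite to $\delta_i$, the transmitted wave shrinks, $\Delta L\le 0$, and only the transfer of the $Q$-weight of $\delta_i$ to an adjacent region remains; requiring this to be nonpositive yields the inequalities in \eqref{eq:composite1}, namely $\xi\ge 1$, $K_\eta^m,K_\zeta^m\ge 1$, $K_\eta^m\le K_\eta^r$, $K_\zeta^m\le K_\zeta^\ell$, and $(\xi-1)/2\le K_\eta^\ell,K_\zeta^r$ (the last threshold arises when a shock is reflected, so that its shock-weight excess $\xi-1$ must be absorbed by a drop in $Q^\ell$ or $Q^r$). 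When instead $\eps_j$ has the same sign as $\delta_i$, the transmitted wave grows and the reflected rarefaction adds to both $L$ and $Q$ of the incoming region; compensating via $|\eps_j|\le\frac{1}{2}|\delta_i\delta|$ gives the two symmetric balance inequalities \eqref{eq:composite2}. The simplified version of the same cases produces $\Delta L=\xi|\eps_j|$ and $\Delta L^0\le|\eps_j|$, hence $\Delta F\le(\xi+1)|\eps_j|-\xi K^m_{\eta,\zeta}|\delta_i\delta|$; inserting the worse $\frac{C_o}{2}$ bound yields precisely \eqref{eq:composite3}.

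The main technical obstacle is the careful bookkeeping in the simplified-solver subcases, where the absorbed wave disappears from $L$ but modifies $L^0$; distinguishing these contributions correctly is what pins down the factor $\frac{2\xi}{\xi+1}$ in \eqref{eq:composite3}. Symmetries $(\eta_0\leftrightarrow\zeta_0)$ and $(1\leftrightarrow 3)$ roughly halve the verification, and each remaining subcase reduces to a short linear inequality in $\xi$, $C_o$, the $K$-weights and $|\eta|,|\zeta|$ that matches one of the listed conditions.
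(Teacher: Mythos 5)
Your plan is essentially the paper's: reduce by the $\eta_0\leftrightarrow\zeta_0$ symmetry to one phase wave, split by family $i$, sign of $\delta_i$, and solver type (accurate/simplified), track $\Delta L$, $\Delta L^0$, $\Delta Q$ separately using \eqref{eq:rels}--\eqref{eq:sign} and the interaction estimates \eqref{eq:intest1}--\eqref{eq:intest2}, and read off the conditions. Where you spell out the computations they match the paper, in particular the $(\xi+1)C_o/2$ bound producing \eqref{eq:composite3}, the observation that $\Delta L^0=0$ in the accurate solver and $\Delta L^0\le|\eps_j|$ in the simplified solver, and the remark that for the omitted-from-$Q$ shocks $\xi\ge 1$ already gives $\Delta L\le 0$.

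Two of your structural claims are nevertheless wrong and would send you to the wrong subcases if executed literally. First, in the ``opposite-sign'' cluster you assert $\Delta L\le 0$: that holds only when the incoming $\delta_i$ is a shock (the shrinking transmitted wave then carries the weight $\xi$). When $\delta_i$ is a rarefaction the \emph{reflected} wave is the shock, so $\Delta L+\Delta L^0=(\xi-1)|\eps_j|\ge 0$, and it is precisely this surplus, bounded by $\frac{\xi-1}{2}|\delta_i\delta|$, that forces $(\xi-1)/2\le K_\zeta^r,K_\eta^\ell$ through the $Q$-drop of the incoming wave; compare the paper's estimate $\Delta F\le(K_\eta^m-K_\eta^r)|\delta_1\eta|+\bigl[\frac{\xi-1}{2}-K_\zeta^r\bigr]|\delta_1\zeta|$. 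Second, $K_\eta^m,K_\zeta^m\ge 1$ does not come from the opposite-sign cluster at all: it arises in the same-sign, simplified-solver, incoming-\emph{rarefaction} case, where the absorbed reflection contributes nothing to $Q$ and one is left with $\Delta F\le(1-K_\zeta^m)|\delta_3\zeta|$ (and its $\eta_0$ mirror). Neither slip breaks the overall strategy, but both misplace conditions in the case analysis you would actually have to carry out.
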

\begin{proof}

%%%%%%%%%%%%%%%%%%%%%%%% Figure interactions2 %%%%%%%%%%%%%%%%%%%

\begin{figure}[htbp]
\begin{picture}(100,100)(-80,-15)
\setlength{\unitlength}{1pt}

\put(50,0){
\put(0,0){\line(0,1){40}} \put(-2,-5){\makebox(0,0){$\zeta_0$}}
\put(-60,0){\line(0,1){40}} \put(-62,-5){\makebox(0,0){$\eta_0$}}
\put(20,0){\line(-1,2){20}} \put(20,-5){\makebox(0,0){$\delta_1$}}
\put(0,40){\line(0,1){30}} \put(-2,75){\makebox(0,0){$\zeta_0$}}
\put(-60,40){\line(0,1){30}} %\put(-62,75){\makebox(0,0){$\eta$}}
\put(0,40){\line(1,2){15}} \put(20,75){\makebox(0,0){$\eps_3$}}
\put(0,40){\line(-1,1){30}} \put(-30,75){\makebox(0,0){$\eps_1$}}
\put(-37,40){\makebox(0,0){$\mathcal{M}$}}
\put(40,40){\makebox(0,0){$\mathcal{R}$}}
\put(-90,40){\makebox(0,0){$\mathcal{L}$}}
\put(-37,-22){\makebox(0,0){$(a)$}}
}

\put(270,0){
\put(0,0){\line(0,1){40}} \put(-2,-5){\makebox(0,0){$\zeta_0$}}
\put(-20,0){\line(1,2){20}}\put(-20,-5){\makebox(0,0){$\delta_3$}}
\put(-60,0){\line(0,1){40}} \put(-62,-5){\makebox(0,0){$\eta_0$}}
\put(-60,40){\line(0,1){30}} %\put(-62,75){\makebox(0,0){$\eta$}}
\put(0,40){\line(0,1){30}} \put(-2,75){\makebox(0,0){$\zeta_{0}$}}
\put(0,40){\line(1,1){30}} \put(30,75){\makebox(0,0){$\eps_3$}}
\put(0,40){\line(-1,2){15}} \put(-20,75){\makebox(0,0){$\eps_1$}}
\put(-37,40){\makebox(0,0){$\mathcal{M}$}}
\put(40,40){\makebox(0,0){$\mathcal{R}$}}
\put(-90,40){\makebox(0,0){$\mathcal{L}$}}
\put(-37,-22){\makebox(0,0){$(b)$}}
}

\end{picture}
\vspace{10pt}
\caption{\label{fig:inter2}{Interactions of $1$- and $3$-waves with $\zeta_0$ solved by means of the Accurate solver. The fronts carrying the composite waves are represented by a single line.}}
\end{figure}
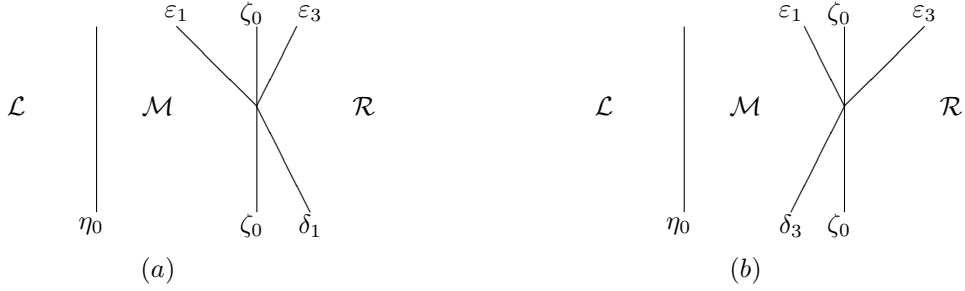

%%%%%%%%%%%%%%%%%%%%%%%%%%%%% End Figure interactions2 %%%%%%%%%%%%

Since the two cases give symmetric conditions, we only analyze interactions involving $\zeta_0$; see Figure~\ref{fig:inter2}. % and \ref{fig:inter3}.
% By \eqref{eq:rels}$_1$ and \eqref{eq:sign}
By \eqref{eq:rels} and \eqref{eq:sign} we have
\begin{equation*}
\left\{
\begin{array}{lll}
%%i=1:\quad &
\eps_3 -\eps_1= -\delta_1, &\quad |\eps_1| -|\delta_1| =  -|\eps_3|\,, &\qquad \mbox{if  }i=1, \\[1mm]
\eps_3 -\eps_1= \delta_3, &\quad |\eps_3|-|\delta_3| = |\eps_1|\,,  &\qquad \mbox{if  }i=3\,.

\end{array}
\right.
\end{equation*}
\smallskip\noindent{\fbox{$i=1$.}} If the interacting wave is a rarefaction,
then by \eqref{eq:intest1}
we have
$$
\Delta L+\Delta L^0=\begin{cases}
\ds\xi|\eps_3|+|\eps_1|-|\delta_1|=(\xi-1)|\eps_3|\le \frac{\xi-1}{2}|\delta_1\zeta| &\qquad \text{if $|\delta_1|\geq \rho$},\\[7pt]
\ds|\eps_3|+|\eps_1|-|\delta_1|=0 &\qquad \text{if $|\delta_1|<\rho$},
\end{cases}
$$
and $\Delta Q = K_\eta^m|\eps_1\eta|- K_\eta^r|\delta_1\eta|
-K_\zeta^r|\delta_1\zeta|$. Therefore, since $|\eps_1|\le|\delta_1|$ by the interaction estimates,
$$
\Delta F
\le \begin{cases}
\ds \left(K_\eta^m-K_\eta^r\right)|\delta_1\eta|+\bigl[\frac{\xi-1}{2}-K_\zeta^r\bigr]|\delta_1\zeta| &\qquad \text{if $|\delta_1|\ge\rho$},\\[7pt]
\ds \left(K_\eta^m-K_\eta^r\right)|\delta_1\eta|-K_\zeta^r|\delta_1\zeta| &\qquad \text{if $|\delta_1|<\rho$},
\end{cases}
$$
which is nonpositive by \eqref{eq:composite1}$_{3,4}$.
Instead, if the interacting wave is a shock, then in both the accurate and simplified cases we have $\Delta L+\Delta L^0=|\eps_3|+\xi|\eps_1|-\xi|\delta_1|=-(\xi-1)|\eps_3|$ and $\Delta Q=\xi(K_\eta^m|\eps_1|-K_\eta^r|\delta_1|)|\eta|$. As a consequence, $\Delta F\le -(\xi-1)|\eps_3|+\xi(K_\eta^m-K_\eta^r)|\delta_1\eta|$, which is $\le 0$ by \eqref{eq:composite1}$_{1,4}$.

\smallskip\noindent{\fbox{$i=3$.}} If the interacting wave is a rarefaction,
then by the interaction estimates \eqref{eq:intest1}
$\Delta L+ \Delta L^0=|\eps_3|+|\eps_1|-|\delta_3|=2|\eps_1|\le |\delta_3\zeta|$ and
$$
\Delta Q= \begin{cases}
K_\eta^m|\eps_1\eta|- K_\zeta^m|\delta_3\zeta| &\quad \text{if $|\delta_3|\ge \rho$},\\[7pt]
-K_\zeta^m|\delta_3\zeta| &\quad \text{if $|\delta_3|< \rho$}.
\end{cases}
$$
Then,
$$
\Delta F \le \begin{cases}
\ds \left[1+ K_\eta^m\frac{|\eta|}{2}-K_\zeta^m\right]|\delta_3\zeta| &\quad \text{if $|\delta_3|\ge \rho$},\\[7pt]
\ds \left[1-K_\zeta^m\right]|\delta_3\zeta| &\quad \text{if $|\delta_3|<\rho$},
\end{cases}
$$
which is nonpositive by \eqref{eq:composite1}$_{2}$, \eqref{eq:composite2}$_{1}$.
On the other hand, if the interacting wave is a shock,
then
$$
\Delta L+\Delta L^0= \begin{cases}
\ds \xi|\eps_1|+\xi|\eps_3|-\xi|\delta_3|=2\xi|\eps_1|\le \xi|\delta_3\zeta| &\qquad \text{if $|\delta_3|\ge \rho$},\\[7pt]
\ds |\eps_1|+\xi|\eps_3|-\xi|\delta_3|=(\xi+1)|\eps_1|\le (\xi+1)\frac{C_o}{2}|\delta_3\zeta| &\qquad \text{if $|\delta_3|<\rho$},
\end{cases}
$$
and
$$
\Delta Q=\begin{cases}
\ds \xi\left(K_\eta^m|\eps_1\eta|-K_\zeta^m|\delta_3\zeta|\right) &\quad \text{if $|\delta_3|\ge \rho$},\\[7pt]
-\xi K_\zeta^m|\delta_3\zeta| &\quad \text{if $|\delta_3|< \rho$}.
\end{cases}
$$
Therefore,
$$
\Delta F \le \begin{cases}
\ds \xi\left[1+K_\eta^m\frac{|\eta|}{2}-K_\zeta^m\right]|\delta_3\zeta| &\quad \text{if $|\delta_3|\ge \rho$},\\[7pt]
\ds \left[(\xi+1)\frac{C_o}{2}-\xi K_\zeta^m\right]|\delta_3\zeta| &\quad \text{if $|\delta_3|<\rho$},
\end{cases}
$$
which is nonpositive by \eqref{eq:composite2}$_{1}$, \eqref{eq:composite3}$_{1}$.
\end{proof}

\begin{proposition}\label{prop:list2}
    Consider the interaction at time $t>0$ of two waves of the same family $1$
    or $3$ and assume \eqref{rogna}. Then, $\Delta F(t)\le 0$ provided that
    \begin{gather}\label{cond13}
	1\le \xi \le \frac{1}{c(m_o)}, \qquad
	K_\zeta^m\le \frac{\xi-1}{|\zeta|},\qquad
	K_\eta^m\le \frac{\xi-1}{|\eta|},\\
	K_\eta^r|\eta|+K_\zeta^r|\zeta|\le \xi-1,\qquad
	K_\eta^\ell|\eta|+K_\zeta^\ell|\zeta|\le \xi-1.\label{cond13bis}
    \end{gather}
\end{proposition}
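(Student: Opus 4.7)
The two interacting $i$-waves ($i\in\{1,3\}$) are located strictly inside a single region $\mathcal{L}$, $\mathcal{M}$, or $\mathcal{R}$, so the composite waves $\eta_0,\zeta_0$ are untouched and $\Delta L^0=0$. Since same-family rarefactions in the front-tracking algorithm propagate at the common right-characteristic speed and never catch up, only two sub-cases arise: two shocks, or opposite signs. The emerging Riemann problem is resolved by the Accurate solver $\RLL$, producing a transmitted $i$-wave $\eps_i$ and a reflected $j$-wave $\eps_j$ ($j\ne i$) that satisfy the identities of Proposition~\ref{prop:preRsolver} with $a_-=a_+$. My plan is to run a case analysis over the six region/family combinations, treating both sub-cases in each.

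\textbf{Both shocks.} Lemma~\ref{lem:intest}(3) tells me that $\eps_j$ is a rarefaction and $\eps_i$ is a shock, and the first Pre-Riemann identity yields $|\eps_i|+|\eps_j|=|\alpha_i|+|\beta_i|$, whence $\Delta L=-(\xi-1)|\eps_j|\le 0$ by \eqref{cond13}$_1$. For $\Delta Q$ the computation is region-dependent: in $\mathcal{M}$ with $i=1$, the old contribution $\xi K_\eta^m(|\alpha_1|+|\beta_1|)|\eta|$ is replaced by $\xi K_\eta^m|\eps_1\eta|+K_\zeta^m|\eps_j\zeta|$, yielding
\[
\Delta F=|\eps_j|\bigl[-(\xi-1)-\xi K_\eta^m|\eta|+K_\zeta^m|\zeta|\bigr],
\]
which is $\le 0$ precisely under \eqref{cond13}$_2$; the case $i=3$ in $\mathcal{M}$ is symmetric and requires \eqref{cond13}$_3$. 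In $\mathcal{L}$ with $i=1$, the incoming $1$-waves contribute nothing to $Q^\ell$ while the new $3$-rarefaction $\eps_j$ contributes $(K_\eta^\ell|\eta|+K_\zeta^\ell|\zeta|)|\eps_j|$, so \eqref{cond13bis}$_2$ closes the estimate; the symmetric case in $\mathcal{R}$ with $i=3$ uses \eqref{cond13bis}$_1$. The remaining two combinations ($i=3$ in $\mathcal{L}$, $i=1$ in $\mathcal{R}$) already have $\Delta Q\le 0$.

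\textbf{Opposite signs.} Assume $\alpha_i<0<\beta_i$; the other ordering is analogous. By \eqref{eq:chi_def} and \eqref{rogna}, $|\eps_j|\le c(m_o)\min\{|\alpha_i|,|\beta_i|\}$, and the conservation identities give $|\eps_i|=\bigl||\alpha_i|-|\beta_i|\bigr|+|\eps_j|$ with the sign of $\eps_i$ equal to that of $\alpha_i+\beta_i$. A short calculation then yields
\[
\Delta L=\begin{cases}-(\xi+1)|\beta_i|+2\xi|\eps_j| & \text{if $|\alpha_i|>|\beta_i|$,}\\[2pt]-(\xi+1)|\alpha_i|+(\xi-1)|\eps_j| & \text{if $|\beta_i|>|\alpha_i|$.}\end{cases}
\]
The new shock $\eps_j$ of family $j$ contributes an amount to $Q$ weighted by the relevant $K_{\eta,\zeta}^{\ell,m,r}$ while the old $\alpha_i,\beta_i$ terms shrink; bounding $|\eps_j|\le c(m_o)\min\{|\alpha_i|,|\beta_i|\}$ produces inequalities of the form $\{\xi c(m_o)(2+\omega)-(\xi+1)-\omega'\}\min\{|\alpha_i|,|\beta_i|\}\le 0$, where $\omega,\omega'$ are the relevant $K$-weights. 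Condition \eqref{cond13}$_1$ gives $\xi c(m_o)\le 1$, and then \eqref{cond13}$_{2,3}$ in $\mathcal{M}$ and \eqref{cond13bis} in $\mathcal{L},\mathcal{R}$ close the estimate.

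The delicate step will be the mixed-signs case in $\mathcal{M}$: the reflected shock $\eps_j$ approaches the phase wave opposite to the one approached by the incoming $i$-waves, so the $\xi$-weighted $K^m$ contributions do not simply cancel but shift between the $|\eta|$ and $|\zeta|$ weights. Verifying that the single scalar bound $\xi\le 1/c(m_o)$ suffices uniformly across all six region/family combinations, and is consistent with the linear constraints on $K_{\eta,\zeta}^{\ell,m,r}$, is the main balancing act that the conditions of the proposition are calibrated to perform.
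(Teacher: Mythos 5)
Your proposal follows essentially the same route as the paper: it runs the same case analysis (both-shocks versus opposite-signs, in each of $\mathcal{M},\mathcal{L},\mathcal{R}$, for each family), obtains the same $\Delta L$ bounds via the additive strength identity (rather than citing \cite{ABCD} as the paper does), and pairs each $\Delta Q$ estimate with the correct conditions among \eqref{cond13}, \eqref{cond13bis}. One small slip worth flagging: the unified identity you write, $|\eps_i|=\bigl||\alpha_i|-|\beta_i|\bigr|+|\eps_j|$, holds only when $|\alpha_i|>|\beta_i|$ (when $|\beta_i|>|\alpha_i|$ the transmitted wave is a rarefaction and the $|\eps_j|$ term enters with a minus sign), but since your two explicit $\Delta L$ formulas are correct in each sub-case the slip does not propagate.
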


\begin{proof}
First, we consider the interactions taking place in $\mathcal{M}$, see Figure~\ref{fig:inter4}. For brevity, we only deal with the case of interactions between two $3$-waves $\alpha_3$ and $\beta_3$ giving rise to $\eps_1$ and $\eps_3$ (the $1$-waves case is analogous).

%%%%%%%%%%%%%%%%%%%%%%%% Figure interactions4 %%%%%%%%%%%%%%%%%%%

\begin{figure}[htbp]
\begin{picture}(100,80)(-130,-15)
\setlength{\unitlength}{0.8pt}

\put(130,0){
\put(-75,0){\line(0,1){40}} \put(-77,-5){\makebox(0,0){$\eta_0$}}
\put(-75,40){\line(0,1){30}} %\put(-72,75){\makebox(0,0){$\eta$}}
\put(55,0){\line(0,1){40}} \put(59,-5){\makebox(0,0){$\zeta_0$}}
\put(55,40){\line(0,1){30}} %\put(-2,75){\makebox(0,0){$\eta$}}
\put(0,40){\line(-1,-1){45}}\put(-55,0){\makebox(0,0){$\alpha_3$}}
\put(0,40){\line(-2,-3){30}}\put(-10,0){\makebox(0,0){$\beta_3$}}
\put(0,40){\line(1,2){15}} \put(20,75){\makebox(0,0){$\eps_3$}}
\put(0,40){\line(-1,1){30}} \put(-30,75){\makebox(0,0){$\eps_1$}}
\put(-100,40){\makebox(0,0){$\mathcal{L}$}}
\put(30,40){\makebox(0,0){$\mathcal{M}$}}
\put(90,40){\makebox(0,0){$\mathcal{R}$}}
}

\end{picture}

\caption{\label{fig:inter4}{Interactions of $3$-waves in $\mathcal{M}$.}}
\end{figure}

%%%%%%%%%%%%%%%%%%%%%%%%%%%%% End Figure interactions4 %%%%%%%%%%%%

\noindent If both $\alpha_3$ and $\beta_3$ are shocks, then $\eps_1$ is a rarefaction by Lemma~\ref{lem:intest} and, as in \cite[Proposition 5.8]{ABCD}, we have
\begin{equation}\label{Delta_L_xi_13}
\Delta L+|\eps_1|(\xi-1)=0\,,
\end{equation}
for any $\xi\ge 1$. Moreover, we have
$$
\Delta Q=K_\eta^m|\eps_{1}\eta|-\xi K_\zeta^m|\eps_1\zeta|\le K_\eta^m|\eps_{1}\eta| \,,\qquad
\Delta F\le \left[-(\xi-1)+K_\eta^m|\eta|\right]|\eps_{1}|
$$
and $F$ is non-increasing by \eqref{cond13}$_{1,3}$.
On the other hand, when the two interacting waves are of different type,
for example $\alpha_3<0<\beta_3$, as in \cite[Proposition 5.8]{ABCD}, by \eqref{cond13}$_{1}$ one can deduce that
\begin{equation}\label{Delta_L_xi_13_SR}
\Delta L+\xi(\xi-1)|\eps_1|\le 0\,.
\end{equation}
If $\eps_3$ is a rarefaction, then $\Delta Q= \xi K_\eta^m |\eps_1\eta|+K_\zeta^m\left(|\eps_{3}|-\xi|\alpha_3|-|\beta_{3}|\right)|\zeta|$; on the other hand, if $\eps_3$ is a shock, $\Delta Q= \xi K_\eta^m |\eps_1\eta|+K_\zeta^m\left(\xi|\eps_{3}|-\xi|\alpha_3|-|\beta_{3}|\right)|\zeta|$. Therefore, by Lemma~\ref{lem:intest} in both cases it holds
$$
\Delta Q\le\xi K_\eta^m|\eps_1\eta|\,, \qquad \Delta F\le \xi \left[-(\xi-1)+K_\eta^m|\eta|\right]|\eps_1|
$$
and $F$ decreases by \eqref{cond13}$_{1,3}$. The analysis of the interactions between $1$-waves requires symmetrically the condition $K_\zeta^m\le(\xi-1)/|\zeta|$.

Next, we analyze the case of interactions taking place in $\mathcal{R}$, the case of interactions in $\mathcal{L}$ being analogous. By \eqref{cond13}$_{1}$ it is easy to verify that $F$ decreases when two $1$-waves interact.

%%%%%%%%%%%%%%%%%%%%%%%% Figure interactions5 %%%%%%%%%%%%%%%%%%%

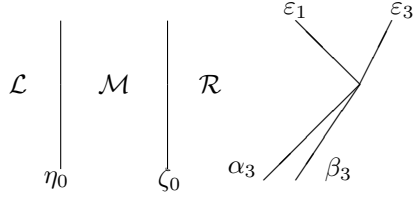
\begin{figure}[htbp]
\begin{picture}(100,80)(-130,-15)
\setlength{\unitlength}{0.8pt}

\put(190,0){
\put(-140,0){\line(0,1){40}} \put(-142,-5){\makebox(0,0){$\eta_0$}}
\put(-140,40){\line(0,1){30}} %\put(-72,75){\makebox(0,0){$\eta$}}
\put(-90,0){\line(0,1){40}} \put(-89,-5){\makebox(0,0){$\zeta_0$}}
\put(-90,40){\line(0,1){30}} %\put(-2,75){\makebox(0,0){$\eta$}}
\put(0,40){\line(-1,-1){45}}\put(-55,0){\makebox(0,0){$\alpha_3$}}
\put(0,40){\line(-2,-3){30}}\put(-10,0){\makebox(0,0){$\beta_3$}}
\put(0,40){\line(1,2){15}} \put(20,75){\makebox(0,0){$\eps_3$}}
\put(0,40){\line(-1,1){30}} \put(-30,75){\makebox(0,0){$\eps_1$}}
\put(-160,40){\makebox(0,0){$\mathcal{L}$}}
\put(-115,40){\makebox(0,0){$\mathcal{M}$}}
\put(-70,40){\makebox(0,0){$\mathcal{R}$}}
}

\end{picture}

\caption{\label{fig:inter5}{Interactions of $3$-waves in $\mathcal{R}$.}}
\end{figure}

%%%%%%%%%%%%%%%%%%%%%%%%%%%%% End Figure interactions5 %%%%%%%%%%%%

\noindent Now, we consider the interactions between $3$-waves (see Figure~\ref{fig:inter5}). When the interacting waves $\alpha_3,\beta_3$ are both shocks we have \eqref{Delta_L_xi_13}, while in the other two cases estimate \eqref{Delta_L_xi_13_SR} still holds under condition \eqref{cond13}$_1$. If $\alpha_3,\beta_3<0$ we have
$$
\Delta Q=K_\eta^r|\eps_1\eta|+K_\zeta^r|\eps_{1}\zeta|\,,\qquad \Delta F= \left[-(\xi-1)+K_\eta^r|\eta|+K_\zeta^r|\zeta|\right]|\eps_1|\,,
$$
while if, for example, $\alpha_3<0<\beta_3$ we have
$$
\Delta Q =K_\eta^r\,\xi|\eps_1\eta|\,,\qquad
\Delta F\le \xi\left[-(\xi-1)+K_\eta^r|\eta|\right]|\eps_1|\,.
$$
Consequently, $F$ is non-increasing by \eqref{cond13bis}$_{1}$. The condition
$K_\eta^\ell|\eta|+K_\zeta^\ell|\zeta|\le \xi-1$ is required for the interactions occurring in the region $\mathcal{L}$.
\end{proof}

%\begin{enumerate}
%\item $\ds 1<\xi\le \frac{1}{c(m_o)}$
%\item $K_\eta^m\le K_\eta^r$ \hspace{1cm} e \hspace{1cm} $K_\zeta^m\le K_\zeta^\ell$
%\item $\ds\frac{\xi-1}{2}\le K_\zeta^r$ \hspace{1cm} e \hspace{1cm} $\ds\frac{\xi-1}{2}\le K_\eta^\ell$
%\item $\ds 1+K_\eta^m\frac{|\eta|}{2}-K_\zeta^m\le 0$ \hspace{1cm} e \hspace{1cm} $\ds 1+K_\zeta^m\frac{|\zeta|}{2}-K_\eta^m\le 0$
%\item $K_\zeta^m\ge 1$ \hspace{1cm} e \hspace{1cm} $K_\eta^m\ge 1$
%\item $K_\eta^m|\eta|\le \xi-1$ \hspace{1cm} e \hspace{1cm} $K_\zeta^m|\zeta|\le \xi-1$
%\item $K_\eta^r|\eta|+K_\zeta^r|\zeta|\le \xi-1$ \hspace{1cm} e \hspace{1cm} $K_\eta^\ell|\eta|+K_\zeta^\ell|\zeta|\le \xi-1$
%\item $\ds(\xi+1)\frac{C_o}{2}-\xi K_\zeta^m \le 0$ \hspace{1cm} e \hspace{1cm} $\ds(\xi+1)\frac{C_o}{2}-\xi K_\eta^m \le 0$
%\end{enumerate}

Now, we can determine the order of choice of the parameters. To simplify the analysis, we can let $K_\eta^m=K_\eta^r$ and $K_\zeta^m=K_\zeta^\ell$, since the final result does not change otherwise. Once $\eta,\zeta$ have been fixed, we choose in turn: $m_o$, $\xi$, $K_\eta^m$ and $K_\zeta^m$, $K_\zeta^r$ and $K_\eta^\ell$; finally, we choose $\rho$ (i.e.\ $C_o$). First, notice that the conditions in \eqref{eq:composite2} identify the set in the $(K_\eta^m,K_\zeta^m)$-plane represented in Figure~\ref{KmzKme}. Hence, by \eqref{eq:composite2} we deduce
$$
K_\eta^m \ge 1+K_\zeta^m\frac{|\zeta|}{2}\ge 1+\frac{|\zeta|}{2}\bigl(1+K_\eta^m\frac{|\eta|}{2}\bigr) \qquad \text{and} \qquad
K_\zeta^m \ge 1+K_\eta^m\frac{|\eta|}{2}\ge 1+\frac{|\eta|}{2}\bigl(1+K_\zeta^m\frac{|\zeta|}{2}\bigr),
$$
that imply
\begin{equation}\label{km}
K_\eta^m\ge \frac{1+|\zeta|/2}{1-|\eta\zeta|/4} \qquad \text{and} \qquad K_\zeta^m\ge \frac{1+|\eta|/2}{1-|\eta\zeta|/4}.
\end{equation}
In particular, by replacing the inequality sign by equality in \eqref{km} we get the coordinates of the intersection point $V$ between the two lines of Figure~\ref{KmzKme}. Notice also that \eqref{km} implies \eqref{eq:composite1}$_2$.

%%%%%%%%%%%%%%%%%%%%%%%%%%%%%%%%%%%%%%%%%%%%%%%%%%%%%%%%%%%%%%%%%%%%%%%%%

\begin{figure}[htbp]
    \begin{center}
	\includegraphics[width=5cm]{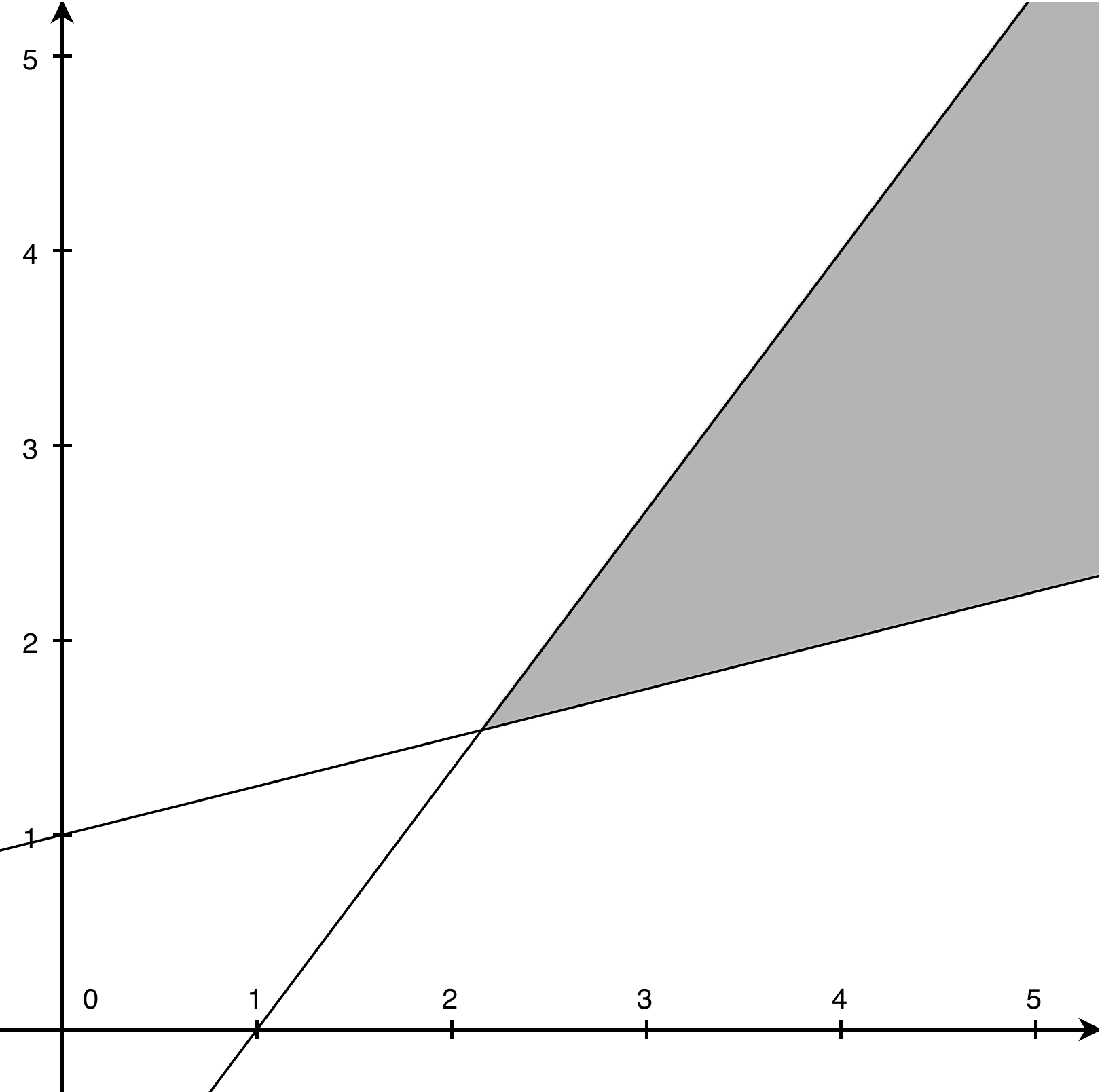}
	\begin{picture}(0,0)%
	    \put(-21,-5){$K_\eta^m$}%
		\put(-87,50){$V$}	
	    \put(-160,130){$K_\zeta^m$}%
	\end{picture}%
    \end{center}
    %\vspace{-0.7cm}
    \caption{\label{KmzKme} Graphical representation of conditions \eqref{eq:composite2} for $|\eta|=1/2$ and $|\zeta|=3/2$.}
\end{figure}

%%%%%%%%%%%%%%%%%%%%%%%%%%%%%%%%%%%%%%%%%%%%%%%%%%%%%%%%%%%%%%%%%%%%%%%%%

Since we have chosen $K_\eta^m=K_\eta^r$ and $K_\zeta^m=K_\zeta^\ell$, conditions \eqref{cond13bis} imply \eqref{cond13}$_{2,3}$. By \eqref{eq:composite1}$_3$ and \eqref{cond13bis}$_1$, we get $K_\eta^m|\eta|+(\xi-1)|\zeta|/2\le \xi-1$, which is equivalent to
\begin{equation}\label{eq:stella}
K_\eta^m\,\frac{|\eta|}{1-|\zeta|/2}\le \xi-1\,;
\end{equation}
similarly, by \eqref{eq:composite1}$_3$ and \eqref{cond13bis}$_2$ we get
\begin{equation}\label{eq:stella2}
K_\zeta^m\,\frac{|\zeta|}{1-|\eta|/2}\le \xi-1\,.
\end{equation}
By \eqref{km}, \eqref{eq:stella} and \eqref{eq:stella2} it follows that $\xi$ must satisfy the inequality
$$
\xi \ge 1+\max\left\{\frac{1+|\zeta|/2}{1-|\zeta|/2}\,\frac{|\eta|}{1-|\eta\zeta|/4},\,\frac{1+|\eta|/2}{1-|\eta|/2}\,\frac{|\zeta|}{1-|\eta\zeta|/4}\right\}\,.
$$
This condition must match with \eqref{cond13}$_1$; then, recalling \eqref{eq:b} we must require
\begin{equation}\label{eq:ximo0}
1+\mathcal{H}_b(|\eta|,|\zeta|)\le \xi \le \frac{1}{c(m_o)}\,,
\end{equation}
which is a condition that relates $m_o$ to $|\eta|,|\zeta|$. When one of the phase waves tends to zero, $\mathcal{H}_b(|\eta|,|\zeta|)$ tends to the other one and we completely recover the results of \cite{ABCD}.

Summarizing, we choose the parameters as follows and keep strict inequalities for later need; let $(|\eta|,|\zeta|)\in D_b$ be given.
\begin{itemize}
\item First, we fix $m_o$ such that
\begin{equation}\label{eq:ximo1}
1+\mathcal{H}_b(|\eta|,|\zeta|)<\frac{1}{c(m_o)}
\end{equation}
and take $\xi$ in the interior of the interval given by \eqref{eq:ximo0}.
\item In the $(K_\eta^m,K_\zeta^m)$-plane we choose a point in the affine cone defined by \eqref{eq:composite2} and sufficiently close to $V$; moreover, we require
\begin{equation}\label{eq:km2}
K_\eta^m\,\frac{|\eta|}{1-|\zeta|/2}< \xi-1 \qquad \text{and} \qquad K_\zeta^m\,\frac{|\zeta|}{1-|\eta|/2}< \xi-1\,.
\end{equation}
\item We choose $K_\eta^r=K_\eta^m$, $K_\zeta^\ell=K_\zeta^m$ and, then, by \eqref{eq:km2} we choose $K_\zeta^r$ and $K_\eta^\ell$ such that
\begin{equation}\label{eq:km3}
\frac{\xi-1}{2}<K_\zeta^r<\frac{\xi-1}{|\zeta|}-K_\eta^m\frac{|\eta|}{|\zeta|} \qquad \text{and} \qquad \frac{\xi-1}{2}<K_\eta^\ell<\frac{\xi-1}{|\eta|}-K_\zeta^m\frac{|\zeta|}{|\eta|}\,.
\end{equation}
\item Finally, we choose $\rho$ such that \eqref{eq:composite3} holds.
\end{itemize}

\noindent Now, we can prove the global in time decreasing of the functional $F$.

\begin{proposition}\label{prop:lastb}
Let $m_o > 0$ satisfy \eqref{eq:ximo1}. Moreover, assume that $\xi$, $K_{\eta,\zeta}^{\ell,m,r}$ and $\rho$ satisfy \eqref{eq:ximo0}--\eqref{eq:km3} and \eqref{eq:composite3}. Then, the following two statements hold.
\begin{itemize}
\item[i)] \emph{Local Decreasing}. For any interaction at time $t>0$ between two waves satisfying \eqref{rogna}, it holds
\begin{equation*}%\label{eq:F1decr}
\Delta F(t) \le 0\,.
\end{equation*}
\item[ii)] \emph{Global Decreasing}. If
\begin{equation}
	\label{eq:boundLb}
	\bar{L}(0)\le m_o\hspace{0.8pt} c(m_o)
\end{equation}
and the approximate solution is defined in $[0,T]$, then $F(0)\le m_o$, $\Delta F(t)\le 0$ for every $t\in(0,T]$ and \eqref{rogna} is satisfied.
\end{itemize}
\end{proposition}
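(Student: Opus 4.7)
The argument splits naturally along the two items of the statement.

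For part (i), I distinguish three mutually exclusive types of interaction at time $t$: (a) a $1$- or $3$-wave hitting a composite wave $\eta_0$ or $\zeta_0$ (solved by either the Accurate or the Simplified solver), treated by Proposition~\ref{prop:list1}; (b) two waves of the same family colliding, treated by Proposition~\ref{prop:list2} (for which the size bound \eqref{rogna} is provided by the standing induction hypothesis of part (ii)); (c) two waves of different families crossing, in which case \eqref{eq:intest3} preserves the individual strengths, the classification of each wave into $L$, $L^0$, and $Q$ is unchanged, and therefore $\Delta F(t)=0$ trivially. To apply Propositions~\ref{prop:list1} and~\ref{prop:list2} one only has to check that all of \eqref{eq:composite1}--\eqref{eq:composite3}, \eqref{cond13}, and \eqref{cond13bis} are satisfied by the parameter choices made in the bullet list preceding the statement: \eqref{eq:composite2} holds because $(K^m_\eta,K^m_\zeta)$ is chosen in the affine cone depicted in Figure~\ref{KmzKme}; \eqref{eq:composite1} reduces, via the identifications $K_\eta^m=K_\eta^r$ and $K_\zeta^m=K_\zeta^\ell$, to the lower bounds $(\xi-1)/2<K_\zeta^r$, $K_\eta^\ell$ of \eqref{eq:km3}; \eqref{cond13} follows from \eqref{eq:ximo0} and \eqref{eq:km2}, while \eqref{cond13bis} follows from \eqref{eq:km3}; finally \eqref{eq:composite3} is precisely the last inequality used to fix $\rho$.

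For part (ii) I argue by induction on the locally finite set of interaction times in $[0,T]$. At the base time $t=0^+$ no reflection has yet been absorbed into a phase wave, so $L^0(0)=0$; I would then bound $L(0)\le\xi\bar L(0)$ and write each contribution to $Q(0)$ as a linear functional in the wave strengths with coefficients of the form $K_\eta^\bullet|\eta|+K_\zeta^\bullet|\zeta|$, using the strict inequalities \eqref{eq:km2}--\eqref{eq:km3} to bound those coefficients. Combined with the upper bound $\xi<1/c(m_o)$ coming from \eqref{eq:ximo0} and with the hypothesis $\bar L(0)\le m_o\,c(m_o)$, this yields $F(0)\le m_o$, and then the elementary chain $|\delta_i|\le\bar L(0)\le F(0)\le m_o$ delivers \eqref{rogna} at $t=0^+$.

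The inductive step is then short: assuming $F$ is non-increasing and \eqref{rogna} holds on $[0,t_k)$, where $t_k$ is the next interaction time, part (i) applies at $t_k$ and yields $\Delta F(t_k)\le 0$, so $F(t_k)\le F(0)\le m_o$; the inequality $\bar L(t_k)\le L(t_k)\le F(t_k)\le m_o$ then propagates \eqref{rogna} past $t_k$, closing the induction. I expect the main obstacle of the whole argument to be the base estimate $F(0)\le m_o$, where the combined weights $\xi$ and the products $K_{\eta,\zeta}^{\ell,m,r}|\eta|$, $K_{\eta,\zeta}^{\ell,m,r}|\zeta|$ appearing in $L(0)$ and $Q(0)$ have to be absorbed by the factor $c(m_o)$ supplied by the hypothesis; the strict inequalities preserved throughout the parameter selection are exactly what is needed for this absorption to work.
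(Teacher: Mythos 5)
Part~(i) of your argument is essentially the paper's: cases (a) and (b) are exactly Propositions~\ref{prop:list1} and~\ref{prop:list2}, your verification of \eqref{eq:composite1}--\eqref{eq:composite3}, \eqref{cond13}, \eqref{cond13bis} from the parameter choices is correct, and your case~(c) (different families) is handled trivially by \eqref{eq:intest3} since strengths, signs, and regions are preserved, so $\Delta F=0$.

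Part~(ii), however, has a genuine gap in the closing chain. Tracing through the bound you sketch: $L(0)\le\xi\bar L(0)$, and the coefficients of $Q(0)$ are products of the form $K^{\,\bullet}_{\eta,\zeta}|\eta|$, $K^{\,\bullet}_{\eta,\zeta}|\zeta|$ that \eqref{cond13} and \eqref{cond13bis} bound by $\xi-1$. Combining these, the combined $L+Q$ weight on a shock in any region is $\le \xi(1+(\xi-1))=\xi^2$, so the best one obtains is $F(0)\le\xi^2\bar L(0)$. With $\bar L(0)\le m_o\,c(m_o)$ and $\xi\,c(m_o)<1$ this gives $F(0)\le\xi^2 m_o\,c(m_o)\le\xi m_o$, which is strictly \emph{larger} than $m_o$ because $\xi>1$. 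Your chain $|\delta_i|\le\bar L(t_k)\le L(t_k)\le F(t_k)\le F(0)\le m_o$ therefore fails at the last inequality, and the induction does not close. The missing ingredient is that \eqref{rogna} is a bound on \emph{shocks} only, and shocks enter $L$ with weight $\xi$: for an $i$-shock $\delta_i$ one has $\xi|\delta_i|\le L(t)\le F(t)$, so
\[
|\delta_i|\le\frac{1}{\xi}\,F(t)\le\frac{1}{\xi}\,F(0)\le\frac{1}{\xi}\,\xi^2\bar L(0)=\xi\,\bar L(0)\le\xi\,m_o\,c(m_o)\le m_o\,.
\]
This extra factor $1/\xi$ is exactly what the paper uses and is essential; without it the loop you describe yields only $|\delta_i|\le\xi m_o$, which is not \eqref{rogna}.
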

\begin{proof}
The first statement has been proved above. As for the second assertion, let us denote by $L^{\ell,m,r}_{iR}$ and $L^{\ell,m,r}_{iS}$ the partial sums in $L^{\ell,m,r}$ due to $i$-rarefaction waves ($iR$) and $i$-shock waves ($iS$), respectively. By \eqref{cond13}$_{2,3}$ we have
\begin{align*}
F^m(0)&= L^m(0)+Q^m(0)\le L^m(0)\left(1+\max\{K_\eta^m|\eta|,K_\zeta^m|\zeta|\}\right)\le \xi^2 \bar{L}^{m}(0)\,.
\end{align*}
Moreover, from \eqref{cond13bis} it follows
\begin{align*}
F^{\ell}(0)&\le L^{\ell}_{1R}(0)+L^{\ell}_{1S}(0)+L^{\ell}_{3R}(0)\left(1+K_\eta^\ell|\eta|+ K_\zeta^\ell|\zeta|\right)+L^\ell_{3S}(0)\left(1+K_\zeta^\ell|\zeta|\right)\le \xi^2\bar{L}^{\ell}(0)\,,\\
F^{r}(0)&\le L^{r}_{3R}(0)+L^{r}_{3S}(0)+L^{r}_{1R}(0)\left(1+K_\eta^r|\eta|+ K_\zeta^r|\zeta|\right)+L^r_{1S}(0)\left(1+K_\eta^r|\eta|\right)\le \xi^2\bar{L}^{r}(0)\,.
\end{align*}
Then,
\begin{align*}
    F(0)&=F^\ell(0)+F^m(0)+F^r(0)\le \xi^2\bar{L}(0)\,.%\le \frac{1}{c(m_o)^2}\bar{L}(0)\le m_o\,,
\end{align*}
For a fixed $t\le T$, suppose by induction that $F(\tau)\le m_o$ and $\Delta F(\tau)\le 0$ for every $0<\tau<t$, interaction time. Then, the inequality $\Delta F(t)\le 0$ implies that
$$
F(t)\le F(0)\le \xi^2\bar{L}(0)\,.
$$
Hence, by \eqref{eq:boundLb} the size of a shock $\delta_i$ ($i=1,3$) at time $t$ satisfies
$$
|\delta_i|\le \frac{1}{\xi}F(t)\le \xi \bar L(0)\le \frac{1}{c(m_o)}\bar L(0)\le m_o
$$
and \eqref{rogna} is verified.
\end{proof}

%%%%%%%%%%%%%%%%%%%%%%%%%%%%%%%%%%%%%%%%%%%%%%%%%%%%%%%%%%%%%%%%

\section{The increasing-pressure case}\label{sec:interdstep}
\setcounter{equation}{0}

In this section, we prove the decreasing of the functional $F$ in \eqref{eq:F} in the increasing-pressure case. As before, we first introduce the following interaction potentials
\begin{align*}
Q^\ell& =\left(K_\eta^\ell|\eta|+ K_\zeta^\ell|\zeta|\right)\sum_{\genfrac{}{}{0pt}{}{\delta_3>0}{\delta_3\in\mathcal{L}}}|\delta_3|\,,	 \\
Q^{m}&=K_\eta^m|\eta|\Bigl(\sum_{\genfrac{}{}{0pt}{}{\delta_1>0}{\delta_1\in\mathcal{M}}}|\delta_1|+\xi \sum_{\genfrac{}{}{0pt}{}{\delta_1<0}{\delta_1\in\mathcal{M}}}|\delta_1|\Bigr) + K_\zeta^m|\zeta|\sum_{\genfrac{}{}{0pt}{}{\delta_3>0}{\delta_3\in\mathcal{M}}}|\delta_3|\,,\\
Q^r &= \left(K_\eta^r|\eta|+K_\zeta^r|\zeta|\right)\Bigl(\sum_{\genfrac{}{}{0pt}{}{\delta_1>0}{\delta_1\in\mathcal{R}}}|\delta_1| +\xi\sum_{\genfrac{}{}{0pt}{}{\delta_1<0}{\delta_1\in\mathcal{R}}}|\delta_1|\Bigr)\,.
\end{align*}
The interaction potential $Q$ in \eqref{eq:Q} lacks the $3$-shocks interacting with $\eta_0$ and with $\zeta_0$, see Figure~\ref{fig:incpresQ}. The next proposition, which is analogous to Proposition~\ref{prop:list1}, gives a first list of conditions that guarantee the decrease of $F$.

%%%%%%%%%%%%%%%%%%%%%%%%%%%%%%%%%%%%%%%%%%%%%%%%%%%%%%%%%%%%%%%%%%%%%%%%%%

\begin{figure}[htbp]
\begin{picture}(100,110)(-120,10)
\setlength{\unitlength}{1pt}

\put(25,0){
\put(10,30){\line(0,1){90}}%eta0
\put(150,30){\line(0,1){90}}%zeta0

\put(-75, 58){\vector(4,1){50}}%L
\put(-30, 48){\vector(4,1){140}}%L

\put(75, 90){\vector(-2,1){40}}%M
\put(86, 90){\vector(2,1){40}}%M

\put(190, 25){\vector(-4,1){120}}%R
\put(235, 35){\vector(-4,1){50}}%R

\put(-27,76){\makebox(0,0){$R$}}%L
\put(37,115){\makebox(0,0){$R,S$}}%M
\put(128,115){\makebox(0,0){$R$}}%M
\put(110,88){\makebox(0,0){$R$}}%M
\put(70,60){\makebox(0,0){$R,S$}}%M
\put(185,53){\makebox(0,0){$R,S$}}

\put(10,25){\makebox(0,0){$\eta_0$}}
\put(150,25){\makebox(0,0){$\zeta_0$}}
}
\end{picture}
\vspace{-4ex}
\caption{\label{fig:incpresQ}{The waves considered in $Q$ for the increasing-pressure case.}}
\end{figure}
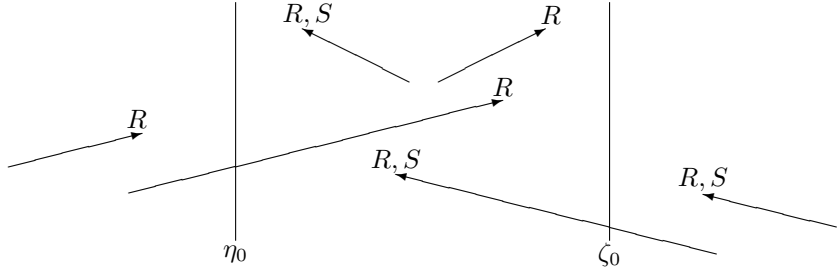

%%%%%%%%%%%%%%%%%%%%%%%%%%%%%%%%%%%%%%%%%%%%%%%%%%%%%%%%%%%%%%%%%%%%%%%%%%

\begin{proposition}\label{prop:list3}
Assume that at time $t>0$ a wave $\delta_i$, $i=1,3$, interacts with one of the composite waves $\eta_0$ or $\zeta_0$. Then, $\Delta F(t)\le 0$ provided that
    \begin{gather}
	\xi\ge 1\,, \qquad K_\eta^m\ge 1\,, \qquad K_\eta^m\le \frac{\xi-1}{|\eta|}\,,\label{eq:composite4}\\ \bigl(\frac{\xi-1}{2}-K_\eta^\ell\bigr)|\eta|+(K_\zeta^m-K_\zeta^\ell)|\zeta|\le 0\,, \qquad
	 (1-K_\zeta^r)|\zeta|+\left(K_\eta^m\bigl(1+\frac{|\zeta|}{2}\bigr)-K_\eta^r\right)|\eta|\le 0\,,\label{eq:composite5}\\
	\frac{\xi-1}{2}+K_\eta^m\xi\frac{|\eta|}{2}-K_\zeta^m\le 0,\qquad 1+K_\zeta^m\frac{|\zeta|}{2}- K_\eta^m\le 0\,,\label{eq:composite6} \\
	\left((\xi+1)\frac{C_o}{2}-\xi K_\zeta^r\right)|\zeta|+\xi \left(K_\eta^m\bigl(1+\frac{C_o}{2}|\zeta|\bigr)-K_\eta^r\right)|\eta|\le 0\,,\qquad (\xi+1)\frac{C_o}{2}-\xi K_\eta^m\le 0\,. \label{eq:composite7}
    \end{gather}
\end{proposition}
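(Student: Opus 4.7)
The plan is to mirror Proposition~\ref{prop:list1} and proceed by exhaustive case analysis, paying careful attention to the fact that the potential $Q$ here has a different (and more asymmetric) structure (see Figure~\ref{fig:incpresQ}) than in the bubble case. Since the two phase waves satisfy $\eta>0$ and $\zeta>0$, the sign patterns of the outgoing waves will differ systematically from those in Proposition~\ref{prop:list1}, and this will be the source of the coupling in \eqref{eq:composite5} and \eqref{eq:composite7}.

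First I would partition the interactions into four primary configurations, determined by which composite wave is hit ($\eta_0$ or $\zeta_0$) and by the family $i\in\{1,3\}$ of the incoming wave; each is then subdivided according to whether $\delta_i$ is a rarefaction or a shock, and whether the Accurate ($|\delta_i|\ge\rho$) or Simplified ($|\delta_i|<\rho$) solver applies. Using the sign relations \eqref{eq:sign}, with $\delta=\eta>0$ or $\delta=\zeta>0$, any $3$-wave produces a reflected $1$-wave of opposite sign, while any $1$-wave produces a reflected $3$-wave of the same sign; this is the structural difference from the bubble analysis and dictates which waves are kept in $Q$ as highlighted in Figure~\ref{fig:incpresQ}. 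From \eqref{eq:rels} one then reads off the quantitative identity $|\eps_3|+|\eps_1|=|\delta_i|$ (or its analogue) that is needed in each subcase.

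In each subcase I would compute $\Delta L+\Delta L^0$ by plugging the signs into the definition of $L$ (with shocks weighted by $\xi$) and, when the Simplified solver is used, by accounting for the modification of the composite wave via $|\eps_0-\delta_0|=|\eps_j|$. The bound on $|\eps_j|$ then comes from Lemma~\ref{lem:intest}: either $\tfrac12|\delta_i\delta|$ or the larger $\tfrac{C_o}{2}|\delta_i\delta|$, depending on the signs of $\delta_i$ and $\delta$. Next, $\Delta Q$ is computed by tracking which region and family each wave occupies: for example, a $3$-rarefaction leaving $\mathcal{L}$ for $\mathcal{M}$ swaps its weight from $K_\eta^\ell|\eta|+K_\zeta^\ell|\zeta|$ to $K_\zeta^m|\zeta|$, a $1$-wave leaving $\mathcal{R}$ for $\mathcal{M}$ swaps its weight from $K_\eta^r|\eta|+K_\zeta^r|\zeta|$ to $K_\eta^m|\eta|$, and a reflected $1$-shock staying in $\mathcal{L}$ disappears from $Q$ entirely (as $Q^\ell$ carries no $1$-wave terms). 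Summing the two contributions produces an upper bound on $\Delta F$ of the form $\bigl(A|\eta|+B|\zeta|\bigr)|\delta_i|$, and the appropriate inequality among \eqref{eq:composite4}--\eqref{eq:composite7} forces $A|\eta|+B|\zeta|\le 0$. Concretely, I expect the Accurate rarefaction case at $\eta_0$ with $i=3$ to yield exactly \eqref{eq:composite5}$_1$, the symmetric Accurate case at $\zeta_0$ with $i=1$ to yield \eqref{eq:composite5}$_2$, the corresponding shock cases to use \eqref{eq:composite6}, and the Simplified shock cases (which carry the $C_o$ factor) to use \eqref{eq:composite7}.

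The main obstacle will be bookkeeping rather than any individual estimate: unlike in the bubble case, the asymmetry of $Q$ forces several subcases in which transferring a wave across $\eta_0$ or $\zeta_0$ simultaneously changes the weight attached to $|\eta|$ and to $|\zeta|$, so that both phase strengths appear coupled in the bound on $\Delta F$. This is precisely why \eqref{eq:composite5} and \eqref{eq:composite7} are mixed linear inequalities in $|\eta|,|\zeta|$ rather than separate ones. The most delicate individual subcase I anticipate is the $3$-shock hitting $\zeta_0$ in the Simplified regime, where the $C_o$ factor from \eqref{eq:C_o}, the absorption $\Delta L^0=|\eps_1|$ into $\delta_0^1$, and the migration of the transmitted $3$-shock from $\mathcal{M}$ into $\mathcal{R}$ (with different $K$-weights, so that $K_\eta^m$ and $K_\eta^r$ both enter) all combine to produce \eqref{eq:composite7}$_1$.
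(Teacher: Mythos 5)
Your overall plan — exhaustive case analysis over $\{\eta_0,\zeta_0\}\times\{i=1,3\}\times\{\text{rarefaction},\text{shock}\}\times\{\text{accurate},\text{simplified}\}$, computing $\Delta L+\Delta L^0$ and $\Delta Q$ in each subcase — is exactly the paper's approach, and your identification of the sign relations and of the matches for \eqref{eq:composite5} are correct.

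However, the paragraph on the ``most delicate subcase'' contains two concrete errors that suggest the case bookkeeping was not fully worked out. First, in the increasing-pressure case $\eta>0$ and $\zeta>0$, the $C_o$ factor from \eqref{eq:intest2} applies only when $\delta_i<0$ together with ($i=1$, $\delta>0$) or ($i=3$, $\delta<0$); since $\delta=\zeta>0$, a $3$-shock hitting $\zeta_0$ never triggers the $C_o$ bound — only $1$-shocks do (at either phase wave). Second, a transmitted $3$-shock carries no weight in $Q$ on either side of $\zeta_0$: $Q^m$ counts only $3$-\emph{rarefactions}, and $Q^r$ contains no $3$-wave terms at all, so neither $K_\eta^m$ nor $K_\eta^r$ enters through it. In fact the $3$-shock at $\zeta_0$ is one of the easiest cases: $\Delta L+\Delta L^0=-(\xi-1)|\eps_1|$ is already nonpositive, and $\Delta Q$ is either $K_\eta^m|\eps_1\eta|$ (accurate) or $0$ (simplified), so only \eqref{eq:composite4}$_{1,3}$ is needed. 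The inequality \eqref{eq:composite7}$_1$ actually arises from the $1$-shock at $\zeta_0$ in the simplified regime, where the $C_o$ factor is legitimate and the transmitted $1$-shock really does shift weights from $\xi(K_\eta^r|\eta|+K_\zeta^r|\zeta|)$ to $\xi K_\eta^m|\eta|$, producing the coupled form. Similarly, \eqref{eq:composite6} comes from the \emph{rarefaction} cases ($i=1$ at $\eta_0$ and $i=3$ at $\zeta_0$), not the shock cases. These misattributions would be caught in a systematic pass, but as written the proposal applies the $C_o$ bound to a configuration where it is unavailable, which is a genuine gap in the argument for that subcase.
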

\begin{proof}
Both in the case of interaction with $\eta_0$ and $\zeta_0$, by \eqref{eq:rels}, \eqref{eq:sign} we have
\begin{equation*}
\left\{
\begin{array}{lll}
%%i=1:\quad &
\eps_3 -\eps_1= -\delta_1, &\quad |\eps_1| -|\delta_1| = |\eps_3|\,, &\qquad \mbox{if  }i=1, \\[1mm]
\eps_3 -\eps_1= \delta_3, &\quad |\eps_3|-|\delta_3| = -|\eps_1|\,,  &\qquad \mbox{if  }i=3\,.
\end{array}
\right.
\end{equation*}
However, in this case we have to treat separately the interactions with the phase waves, since the interaction potential $Q$ is not symmetric with respect to $\eta_0$ and $\zeta_0$.

\smallskip\noindent{\fbox{Interactions with $\eta_0$.}} Assume $i=1$. If $\delta_1$ is a rarefaction, then $\Delta L+\Delta L^0=2|\eps_3|\le |\delta_1\eta|$ and
$$
\Delta Q=\begin{cases}
K_\zeta^m|\eps_3\zeta|-K_\eta^m|\delta_1\eta| &\text{if $|\delta_1|\ge \rho$,}\\[7pt]
-K_\eta^m|\delta_1\eta| &\text{if $|\delta_1|<\rho$.}
\end{cases}
$$
Hence,
$$
\Delta F \le \begin{cases}
\ds \left[1+K_\zeta^m\frac{|\zeta|}{2}- K_\eta^m\right]|\delta_1\eta| &\text{if $|\delta_1|\ge\rho$,}\\[7pt]
\left[1- K_\eta^m\right]|\delta_1\eta| &\text{if $|\delta_1|<\rho$.}
\end{cases}
$$
Then, $\Delta F\le 0$ by \eqref{eq:composite6}$_{2}$ and \eqref{eq:composite4}$_2$. On the other hand, if $\delta_1$ is a shock then
$$
\Delta L+ \Delta L^0=\begin{cases}
\ds 2\xi|\eps_3|\le \xi|\delta_1\eta| &\text{if $|\delta_1|\ge \rho$,}\\[7pt]
\ds (\xi+1)|\eps_3|\le \frac{C_o}{2}(\xi+1)|\delta_1\eta| &\text{if $|\delta_1|<\rho$}
\end{cases}
$$
and $\Delta Q=-\xi K_\eta^m|\delta_1\eta|$. Hence,
$$
\Delta F\le \begin{cases}
\ds \xi\left[1-K_\eta^m\right]|\delta_1\eta| &\text{if $|\delta_1|\ge \rho$,}\\[7pt]
\ds \left[(\xi+1)\frac{C_o}{2}-\xi K_\eta^m\right]|\delta_1\eta|, &\text{if $|\delta_1|<\rho$,}
\end{cases}
$$
which is nonpositive by \eqref{eq:composite4}$_2$ and \eqref{eq:composite7}$_2$.

Now, let $i=3$. If $\delta_3$ is a rarefaction, then
$$
\Delta L +\Delta L^0=\begin{cases}
\ds(\xi-1)|\eps_1|\le \frac{\xi-1}{2}|\delta_3\eta| &\text{if $|\delta_3|\ge \rho$,}\\[7pt]
\ds 0 &\text{if $|\delta_3|<\rho$,}
\end{cases}
$$
and $\Delta Q=K_\zeta^m|\eps_3\zeta|-K_\eta^\ell|\delta_3\eta|-K_\zeta^\ell|\delta_3\zeta|$. Then, since $|\eps_3|\le|\delta_3|$ we have
$$
\Delta F\le \begin{cases}
\ds \left[\bigl(\frac{\xi-1}{2}-K_\eta^\ell\bigr)|\eta|+(K_\zeta^m-K_\zeta^\ell)|\zeta|\right]|\delta_3| &\text{if $|\delta_3|\ge \rho$,}\\[7pt]
\ds (K_\zeta^m- K_\zeta^\ell)|\delta_3\zeta|-K_\eta^\ell|\delta_3\eta| &\text{if $|\delta_3|<\rho$,}
\end{cases}
$$
and $F$ decreases by \eqref{eq:composite5}$_1$. If $\delta_3$ is a shock, then in any case we get $\Delta F= -(\xi-1)|\eps_1|\le 0$ by \eqref{eq:composite4}$_1$.

\smallskip\noindent{\fbox{Interactions with $\zeta_0$.}} Assume $i=1$. If $\delta_1$ is a rarefaction, then $\Delta L+\Delta L^0=2|\eps_3|\le |\delta_1\zeta|$ and $\Delta Q=K_\eta^m|\eps_1\eta|-K_\eta^r|\delta_1\eta|-K_\zeta^r|\delta_1\zeta|$ in both the Accurate and the Simplified case. Hence, by \eqref{eq:composite5}$_2$ we have
$$
\Delta F \le \left[(1-K_\zeta^r)|\zeta|+\left(K_\eta^m\bigl(1+\frac{|\zeta|}{2}\bigr)-K_\eta^r\right)|\eta|\right]|\delta_1| \le 0.
$$
On the other hand, if $\delta_1$ is a shock, then
$$
\Delta L+\Delta L^0=\begin{cases}
\ds 2\xi|\eps_3|\le \xi|\delta_1\zeta| &\text{if $|\delta_1|\ge \rho$,}\\[7pt]
\ds (\xi+1)|\eps_3|\le \frac{C_o}{2}(\xi+1)|\delta_1\zeta|  &\text{if $|\delta_1|<\rho$,}
\end{cases}
$$
and $\Delta Q=\xi K_\eta^m|\eps_1\eta|-\xi K_\eta^r|\delta_1\eta|-\xi K_\zeta^r|\delta_1\zeta|$. Thus,
$$
\Delta F\le \begin{cases}
\ds \xi \left[(1-K_\zeta^r)|\zeta|+\left(K_\eta^m\bigl(1+\frac{|\zeta|}{2}\bigr)-K_\eta^r\right)|\eta|\right]|\delta_1| &\text{if $|\delta_1|\ge \rho$,}\\[7pt]
\ds \left[\left((\xi+1)\frac{C_o}{2}-\xi K_\zeta^r\right)|\zeta|+\xi \left(K_\eta^m\bigl(1+\frac{C_o}{2}|\zeta|\bigr)-K_\eta^r\right)|\eta|\right]|\delta_1| &\text{if $|\delta_1|<\rho$,}
\end{cases}
$$
which is nonpositive by \eqref{eq:composite5}$_2$ and \eqref{eq:composite7}$_{1}$.

Assume $i=3$. If $\delta_3$ is a rarefaction, then
$$
\Delta L+\Delta L^0=\begin{cases}
\ds(\xi-1)|\eps_1|\le \frac{\xi-1}{2}|\delta_3\zeta| &\text{if $|\delta_3|\ge \rho$,}\\[7pt]
\ds 0 &\text{if $|\delta_3|<\rho$,}
\end{cases}
$$
and
$$
\Delta Q=\begin{cases}
\ds \xi K_\eta^m|\eps_1\eta|-K_\zeta^m|\delta_3\zeta| &\text{if $|\delta_3|\ge \rho$,}\\[7pt]
\ds -K_\zeta^m|\delta_3\zeta| &\text{if $|\delta_3|<\rho$.}
\end{cases}
$$
Hence,
$$
\Delta F\le \begin{cases}
\ds \left[\frac{\xi-1}{2}+K_\eta^m\xi\frac{|\eta|}{2}-K_\zeta^m\right]|\delta_3\zeta| &\text{if $|\delta_3|\ge \rho$,}\\[7pt]
\ds -K_\zeta^m|\delta_3\zeta| &\text{if $|\delta_3|<\rho$,}
\end{cases}
$$
which is nonpositive by \eqref{eq:composite6}$_1$. If, instead, $\delta_3$ is a shock, then $\Delta L+\Delta L^0=-(\xi-1)|\eps_1|$ and
$$
\Delta Q=\begin{cases}
K_\eta^m|\eps_1\eta| &\text{if $|\delta_3|\ge \rho$,}\\[7pt]
0 &\text{if $|\delta_3|<\rho$.}
\end{cases}
$$
Then,
$$
\Delta F\le \begin{cases}
-(\xi-1)|\eps_1|+K_\eta^m|\eps_1\eta| &\text{if $|\delta_3|\ge \rho$,}\\[7pt]
-(\xi-1)|\eps_1| &\text{if $|\delta_3|<\rho$,}
\end{cases}
$$
which is nonpositive by \eqref{eq:composite4}$_{1,3}$.
\end{proof}

As for an interaction between two waves of the same family, Proposition~\ref{prop:list2} still holds with the current functional $F$. Therefore, the conditions required on the various parameters are \eqref{cond13}, \eqref{cond13bis}. We omit the proof, since it can be carried out as above.

%\begin{proposition}\label{prop:list4}
%Consider the interaction at time $t>0$ of two waves of the same family $1$ or $3$ and assume \eqref{rogna}. Then, $\Delta F(t)\le 0$ provided that
%\begin{gather}\label{cond13}
%	1\le \xi \le \frac{1}{c(m_o)}, \qquad
%	K_m^{\zeta}\le \frac{\xi-1}{|\zeta|},\qquad
%	K_m^{\eta}\le \frac{\xi-1}{|\eta|},\\
%	K_r^{\eta}|\eta|+K_r^{\zeta}|\zeta|\le \xi-1,\qquad
%	K_\ell^{\eta}|\eta|+K_\ell^{\zeta}|\zeta|\le \xi-1.\label{cond13bis}
%\end{gather}
%\end{proposition}
%
%The proof of Proposition \ref{prop:list4} is omitted since it is carried out in the same way as that of Proposition \ref{prop:list2}.

\bigskip

Here, we make some comments on the conditions \eqref{eq:composite4}--\eqref{eq:composite7} and \eqref{cond13}, \eqref{cond13bis}; finally, we establish the order in which we can choose the parameters.

First, notice that \eqref{eq:composite4}$_2$ is implied by \eqref{eq:composite6}$_2$. Secondly, we can rewrite \eqref{eq:composite5} as
\begin{equation}\label{eq:1s}
\frac{\xi-1}{2}|\eta|+K_\zeta^m|\zeta| \le K_\eta^\ell|\eta|+ K_\zeta^\ell|\zeta|\,, \qquad K_\eta^m\bigl(1+\frac{|\zeta|}{2}\bigr)|\eta|+|\zeta| \le K_\eta^r|\eta|+ K_\zeta^r|\zeta|\,.
\end{equation}
Putting together \eqref{eq:1s}$_2$ with \eqref{cond13bis}$_2$ and \eqref{eq:1s}$_1$ with \eqref{cond13bis}$_1$, we have
\begin{align}
K_\eta^m\bigl(1+\frac{|\zeta|}{2}\bigr)|\eta|+|\zeta| &\le K_\eta^r|\eta|+ K_\zeta^r|\zeta| \le \xi-1\,,\label{eq:4s}\\
\frac{\xi-1}{2}|\eta|+K_\zeta^m|\zeta|& \le K_\eta^\ell|\eta|+ K_\zeta^\ell|\zeta| \le \xi-1\,, \label{eq:3s}
\end{align}
then \eqref{cond13}$_{2,3}$ are implied by \eqref{eq:composite5} and \eqref{cond13bis}. Moreover, by \eqref{eq:composite6} we have
\begin{equation}\label{eq:2s}
K_\eta^m\ge 1+K_\zeta^m\frac{|\zeta|}{2}, \qquad K_\zeta^m\ge \frac{\xi-1}{2}+\xi K_\eta^m\frac{|\eta|}{2},
\end{equation}
that give the following lower bounds on $K_\eta^m$ and $K_\zeta^m$:
\begin{equation}\label{eq:5s}
K_\eta^m \ge \frac{1+(\xi-1)|\zeta|/4}{1-\xi|\eta\zeta|/4}, \qquad K_\zeta^m\ge \frac{(\xi-1)+\xi|\eta|}{2(1-\xi|\eta\zeta|/4)}.
\end{equation}
Remark that \eqref{eq:2s} represents an affine cone in the $(K_\eta^m,K_\zeta^m)$-plane under the condition
\begin{equation}\label{eq:7s}
\xi \le \frac{4}{|\eta\zeta|}\,.
\end{equation}
The vertex is the point whose coordinates are given by \eqref{eq:5s}. Hence, $K_\eta^m$ and $K_\zeta^m$ must be chosen in the non-empty intervals identified by \eqref{eq:4s}, \eqref{eq:5s}$_1$ and \eqref{eq:3s}, \eqref{eq:5s}$_2$, respectively. This means that \eqref{eq:3s}, \eqref{eq:5s}$_2$ give the condition
\begin{equation}\label{eq:8s}
\frac{\xi-1}{2}|\eta|+\frac{(\xi-1)+\xi|\eta|}{2(1-\xi|\eta\zeta|/4)}|\zeta|\le \xi-1,
\end{equation}
while \eqref{eq:4s}, \eqref{eq:5s}$_1$ give the condition
\begin{equation}\label{eq:9s}
\frac{1+(\xi-1)|\zeta|/4}{1-\xi|\eta\zeta|/4}\bigl(1+\frac{|\zeta|}{2}\bigr)|\eta|+|\zeta|\le \xi-1.
\end{equation}
We introduce the notation $|\eta|=x,|\zeta|=y$ and $\xi-1=z$. Then, by \eqref{eq:7s} we rewrite \eqref{eq:8s}  and \eqref{eq:9s} as, respectively,
\begin{align}
\frac{xy}{4}(2-x)z^2+\bigl[y(x+1)-(2-x)\bigl(1-\frac{xy}{4}\bigr)\bigr]z+xy &\le 0\,, \label{eq:new8}\\
\frac{xy}{4}z^2+\bigl[\frac{xy}{8}(4-y)-1\bigr]z +\bigl(1+\frac{y}{2}\bigr)x+y\bigl(1-\frac{xy}{4}\bigr) &\le 0.\label{eq:new9}
\end{align}
We also denote
%\begin{gather*}
%a(x,y)= \frac{xy}{4}(2-x), \qquad b(x,y)=y(x+1)-(2-x)\bigl(1-\frac{xy}{4}\bigr), \qquad c(x,y)=xy,\\[7pt]
%d(x,y)=\frac{xy}{4},\qquad e(x,y)=\frac{xy}{8}(4-y)-1, \qquad f(x,y)=\bigl(1+\frac{y}{2}\bigr)x+y\bigl(1-\frac{xy}{4}\bigr),
%\end{gather*}
$a(x,y)= xy(2-x)/4$, $b(x,y)=y(x+1)-(2-x)\left(1-xy/4\right)$, $c(x,y)=xy$, $d(x,y)=xy/4$, $e(x,y)=xy(4-y)/8-1$ and $f(x,y)=\left(1+y/2\right)x+y\left(1-xy/4\right)$ so that \eqref{eq:new8} and \eqref{eq:new9} become, respectively,
\begin{align}
a(x,y)z^2+b(x,y)z+c(x,y) &\le 0\,,\label{eq:newnew8} \\
d(x,y)z^2+e(x,y)z+f(x,y) &\le 0\,. \label{eq:newnew9}
\end{align}
Notice that the coefficients $a,c,d,f$ are positive, $e$ is
negative %\footnote{Infatti, per $y<2$ vale $(4-y)y<4$ per cui $(4-y)xy/8< x/2<1$}
and $b$ may change sign. In order that each equations associated to \eqref{eq:newnew8} and \eqref{eq:newnew9} have distinct solutions, the discriminants $b^2-4ac$ and $e^2-4df$ must be strictly positive. If $b<0$, such solutions are positive.
Thus, about \eqref{eq:new8} we require
%\begin{equation}\label{eq:dominio8}
%\begin{cases}
%\ds %b(x,y)=
%y(x+1)-(2-x)\bigl(1-\frac{xy}{4}\bigr)<0\,,\\[7pt]
%\ds %b^2(x,y)-4a(x,y)c(x,y)=
%\bigl[y(x+1)-(2-x)\bigl(1-\frac{xy}{4}\bigr)\bigr]^2-x^2y^2(2-x)>0\,,
%\end{cases}
%\end{equation}
%In effetti la regione \Delta < 0 e’ una lente, al cui interno si trova la curva b = 0. Pertanto la curva che borda dall’alto la regione ombreggiata in Figura 1 e’ quella inferiore di \Delta = 0. Poiche’ in tale zona b < 0, la disequazione \Delta > 0 si riduce (estraendo la radice) a −b > 2 ac, cioe’ a b + 2 ac < 0, cioe’ a y(x + 1) − (2 − x)(1 − xy/4) +xy 2 − x < 0.
\begin{equation}\label{eq:dominio8}
y(x+1)-(2-x)\bigl(1-\frac{xy}{4}\bigr)+xy\sqrt{2-x}<0\,,
\end{equation}
while about \eqref{eq:new9} we impose
\begin{equation}\label{eq:dominio9}
% \begin{cases}
% \ds e(x,y)=\frac{xy}{8}(4-y)-1<0,\\[7pt]
% \ds e^2(x,y)-4d(x,y)f(x,y)=
\bigl[\frac{xy}{8}(4-y)-1\bigr]^2-xy\bigl[\bigl(1+\frac{y}{2}\bigr)x+y\bigl(1-\frac{xy}{4}\bigr)\bigr]>0\,.
% \end{cases}
\end{equation}
By a numerical comparison, we see that the set defined by \eqref{eq:dominio8} is included in that defined by \eqref{eq:dominio9}.

%%%%%%%%%%%%%%%%%%%%%%%%%%%%%%%%%%%%%%%%%%%%%%%%%%%%%%%%%%%%%%%%%%%%%%%%%%
%
%\begin{figure}[htbp]
%    \begin{center}
%	\includegraphics[width=5cm]{figura2}
%    \end{center}
%    \vspace{-0.7cm}
%    \caption{\label{figura2} A graphical representation of \eqref{eq:dominio9} in the $(x,y)$-plane.}
%\end{figure}
%
%%%%%%%%%%%%%%%%%%%%%%%%%%%%%%%%%%%%%%%%%%%%%%%%%%%%%%%%%%%%%%%%%%%%%%%%%%

\noindent Under \eqref{eq:dominio8} and \eqref{eq:dominio9}, we denote by
\begin{equation}\label{radici8}
z_{1,2}(x,y)=\frac{(2-x)(1-xy/4)-y(x+1)\pm\sqrt{[y(x+1)-(2-x)(1-xy/4)]^2-x^2y^2(2-x)}}{xy(2-x)/2}
\end{equation}
the two positive solutions of the equation associated to \eqref{eq:newnew8} and by
\begin{equation}\label{radici9}
z_{3,4}(x,y)=\frac{1-xy(4-y)/8\pm \sqrt{[xy(4-y)/8 - 1]^2-xy[(1+y/2)x+y(1-xy/4)]}}{xy/2}
\end{equation}
the solutions of the equation associated to \eqref{eq:newnew9}. Hence, by \eqref{eq:7s},  \eqref{eq:new8} and \eqref{eq:new9} we get
\begin{equation}\label{eq:cxis}
1+\max\left\{z_1(x,y),z_3(x,y)\right\}< \xi < 1+\min\left\{z_2(x,y),z_4(x,y),\frac{4}{xy}-1\right\}\,.
\end{equation}
Therefore, we can define the domain $D_c$ represented in Figure~\ref{fig:Dc} as
\begin{equation}\label{eq:dc}
D_c=\left\{(|\eta|,|\zeta|)=(x,y): \text{\eqref{eq:cxis} holds} \right\}
\end{equation}
and the function
\begin{equation}\label{eq:c}
\mathcal{H}_c(|\eta|,|\zeta|)=\max\left\{z_1(|\eta|,|\zeta|),z_3(|\eta|,|\zeta|)\right\}\,.
\end{equation}
By \eqref{cond13}$_1$ we find the condition that relates $m_o$ to $|\eta|,|\zeta|$, i.e.\
\begin{equation}\label{eq:cximos}
1+\mathcal{H}_c(|\eta|,|\zeta|)<\frac{1}{c(m_o)}\,.
\end{equation}
As a final remark, we notice that \eqref{eq:composite7}$_1$ is equivalent to
\begin{equation}\label{eq:composite7bis}
\left(\frac{\xi+1}{2\xi}C_o-K_\zeta^r\right)|\zeta|+\left(K_\eta^m\bigl(1+\frac{C_o}{2}|\zeta|\bigr)-K_\eta^r\right)|\eta|\le 0\,.
\end{equation}
Then, by taking $\rho$ sufficiently small (since $C_o(\rho)\to 1$ if $\rho\to 0^+$) and $\xi>1$, \eqref{eq:composite7bis} is implied by \eqref{eq:composite5}$_2$.

\smallskip

For the choice of the parameters we proceed as follows.
\begin{itemize}
    \item We fix  $|\eta|,|\zeta|$ such that
    \begin{equation}\label{eq:domains}
    1+\max\left\{z_1(|\eta|,|\zeta|),z_3(|\eta|,|\zeta|)\right\}<1+\min\left\{z_2(|\eta|,|\zeta|),z_4(|\eta|,|\zeta|),\frac{4}{|\eta\zeta|}-1\right\}\,.
    \end{equation}
    Then, we fix $m_o$ such that \eqref{eq:cximos} hold and, in turn, we choose $\xi$ satisfying both \eqref{eq:cxis} and
    \begin{equation}\label{eq:cximo2s}
    1+\mathcal{H}_c(|\eta|,|\zeta|)<\xi<\frac{1}{c(m_o)}\,,
    \end{equation}
    so that \eqref{cond13}$_1$ holds.
	\item We choose $K_\eta^m,K_\zeta^m$ such that \eqref{eq:composite6} holds; in particular, we take $(K_\eta^m,K_\zeta^m)$ sufficiently close to the vertex of the cone and satisfying \eqref{eq:4s}--\eqref{eq:5s}, i.e.\ such that
    \begin{equation}\label{eq:kmezs}
    \frac{1+(\xi-1)|\zeta|/4}{1-\xi|\eta\zeta|/4}\le K_\eta^m<
    \frac{\xi-1-|\zeta|}{(1+{|\zeta|}/{2})|\eta|}\,,
    \qquad
    \frac{(\xi-1)+\xi|\eta|}{2(1-\xi|\eta\zeta|/4)}\le K_\zeta^m<
    \frac{\xi-1-(\xi-1)|\eta|/2}{|\zeta|}\,.
    \end{equation}
    Then, we choose $K_\eta^\ell=K_\zeta^\ell$, $K_\eta^r=K_\zeta^r$ such that
    \begin{gather}\label{eq:klrezs}
	\frac{(\xi-1)|\eta|/2+K_\zeta^m|\zeta|}{|\eta|+|\zeta|}
	\le K_\eta^\ell=K_\zeta^\ell<\frac{\xi-1}{|\eta|+|\zeta|}\,,\qquad
	\frac{K_m ^\eta\bigl(1+|\zeta|/2\bigr)|\eta|+|\zeta|}{|\eta|+|\zeta|}	
	\le K_\eta^r=K_\zeta^r<\frac{\xi-1}{|\eta|+|\zeta|}\,.
    \end{gather}
    Thus, \eqref{cond13bis} and \eqref{eq:composite5} hold; hence, also \eqref{cond13}$_{2,3}$ are verified.
    \item Finally, we choose $\rho$ such that $C_o(\rho)$ verifies \eqref{eq:composite7}.
\end{itemize}

In the next proposition, we claim the global in time decreasing of the functional $F$. The proof is omitted since it is analogous to that of Proposition~\ref{prop:lastb}.

\begin{proposition}\label{prop:lasts}
Let $m_o > 0$ satisfy \eqref{eq:cximos}. Moreover, assume that $\xi$, $K_{\eta,\zeta}^{\ell,m,r}$ and $\rho$ satisfy \eqref{eq:cxis}--\eqref{eq:klrezs} and \eqref{eq:composite7}. Then, the following two statements hold.
\begin{itemize}
\item \emph{Local Decreasing}. For any interaction at time $t>0$ between two waves satisfying \eqref{rogna}, it holds
\begin{equation*}%\label{eq:F1decr}
\Delta F(t) \le 0\,.
\end{equation*}
\item \emph{Global Decreasing}. If
\begin{equation}
	\label{eq:boundLds}
	\bar{L}(0)\le m_o\hspace{0.8pt} c(m_o)
\end{equation}
and the approximate solution is defined in $[0,T]$, then $F(0)\le m_o$, $\Delta F(t)\le 0$ for every $t\in(0,T]$ and \eqref{rogna} is satisfied.
\end{itemize}
\end{proposition}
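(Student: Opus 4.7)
The plan is to mimic closely the proof of Proposition~\ref{prop:lastb}, using Proposition~\ref{prop:list3} in place of Proposition~\ref{prop:list1} for interactions with a phase wave, and reusing verbatim the argument of Proposition~\ref{prop:list2} for interactions between waves of the same family (as already noted in the text, the same conditions \eqref{cond13}--\eqref{cond13bis} are needed, and they are implied by the parameter choices \eqref{eq:cxis}--\eqref{eq:klrezs} via \eqref{eq:4s}--\eqref{eq:3s}).

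For part~(i) (Local Decreasing), I would simply invoke Proposition~\ref{prop:list3} together with the analog of Proposition~\ref{prop:list2}, observing that the parameters have been selected precisely so that every condition required by those two propositions, as well as \eqref{eq:composite7} (which involves $C_o(\rho)$), is satisfied for the chosen $\rho$.

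For part~(ii) (Global Decreasing), the key preliminary step is to bound $F(0)$ by a multiple of $\bar{L}(0)$. Writing $Q^{m}(0)\le \max\{\xi K_\eta^m|\eta|,\, K_\zeta^m|\zeta|\}\, L^m(0)$ and using \eqref{eq:composite4}$_3$ and the right-hand side of \eqref{eq:kmezs}$_2$ (both bounded by $\xi-1$), we obtain $Q^m(0)\le(\xi-1)L^m(0)$ and hence $F^m(0)\le \xi L^m(0)\le \xi^2 \bar L^m(0)$ since $L^m\le\xi \bar L^m$. For the outer regions, \eqref{cond13bis} gives $K_\eta^\ell|\eta|+K_\zeta^\ell|\zeta|\le\xi-1$ and $K_\eta^r|\eta|+K_\zeta^r|\zeta|\le\xi-1$, so that $F^\ell(0)\le\xi^2\bar L^\ell(0)$ and $F^r(0)\le\xi^2\bar L^r(0)$ by the same argument as in Proposition~\ref{prop:lastb}. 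Summing over the three regions yields
\[
F(0)\le \xi^2\,\bar L(0).
\]

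Next, I would proceed by induction on the interaction times in $[0,T]$: assuming $F(\tau)\le m_o$ and $\Delta F(\tau)\le 0$ for every interaction time $\tau<t$, condition \eqref{rogna} holds on $[0,t)$, so Proposition~\ref{prop:list3} and the same-family analog of Proposition~\ref{prop:list2} apply at time $t$ and give $\Delta F(t)\le 0$. Hence $F(t)\le F(0)\le \xi^2\bar L(0)$. For any $i$-shock $\delta_i$ present at time $t$, by the definition of $F$ and the weighting of shocks by $\xi$ in $L$ we have $\xi|\delta_i|\le F(t)$, so
\[
|\delta_i|\le \tfrac{1}{\xi}F(t)\le \xi\,\bar L(0)\le \tfrac{1}{c(m_o)}\bar L(0)\le m_o,
\]
where we used \eqref{eq:cximo2s} and the assumption \eqref{eq:boundLds}. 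This closes the induction and establishes \eqref{rogna} globally on $[0,T]$.

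The only step requiring care, which I expect to be the main bookkeeping obstacle, is the estimate $F^*(0)\le\xi^2\bar L^*(0)$ for $*\in\{\ell,m,r\}$: because $Q$ in the increasing-pressure case is not symmetric in $\eta_0,\zeta_0$ (only $3$-shocks are dropped next to both phase waves, see Figure~\ref{fig:incpresQ}), one must split the sums in $L^*$ into rarefaction and shock parts as in the proof of Proposition~\ref{prop:lastb} and verify that the weights on each part are dominated by $\xi-1$ using exactly the inequalities \eqref{eq:composite4}$_3$, \eqref{eq:kmezs}, \eqref{cond13bis}. Once this is verified, the rest of the argument is mechanical.
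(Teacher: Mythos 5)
Your approach is the right one and matches the paper, which explicitly omits the proof of Proposition~\ref{prop:lasts} because it is analogous to that of Proposition~\ref{prop:lastb}: invoke Proposition~\ref{prop:list3} and the same-family analog of Proposition~\ref{prop:list2} for the local decrease, bound $F(0)\le\xi^2\bar L(0)$ region by region, and close the induction exactly as in the bubble case. One small slip in your bound on $Q^m(0)$: you wrote $Q^m(0)\le\max\{\xi K_\eta^m|\eta|,K_\zeta^m|\zeta|\}L^m(0)$ and then claim this max is $\le\xi-1$, but the factor $\xi$ in front of $K_\eta^m|\eta|$ makes that conclusion fail (condition \eqref{eq:composite4}$_3$ only gives $K_\eta^m|\eta|\le\xi-1$, not $\xi K_\eta^m|\eta|\le\xi-1$). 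Since the $\xi$-weight on the negative $\delta_1$ sum in $Q^m$ is matched by the same $\xi$-weight in $L^m$, the correct termwise ratio bound is $Q^m(0)\le\max\{K_\eta^m|\eta|,K_\zeta^m|\zeta|\}L^m(0)$, without the extra $\xi$; with that fix, \eqref{eq:composite4}$_3$ and \eqref{cond13}$_2$ give $\le(\xi-1)L^m(0)$ and the estimate $F^m(0)\le\xi^2\bar L^m(0)$ follows as in Proposition~\ref{prop:lastb}. Everything else is sound.
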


%%%%%%%%%%%%%%%%%%%%%%%%%%%%%%%%%%%%%%%%%%%%%%%%%%%%%%%%%%%%%%%%%

\section{End of the Proof of Theorem~\ref{thm:main}}\label{sec:convergence}
\setcounter{equation}{0}

In this last section, we conclude the proof of Theorem~\ref{thm:main} and add some final comments. The proof fits in the general framework detailed in \cite{ABCD2}; hence, we only outline the most important changes.

First, as in \cite[Section 6]{ABCD2} the front tracking algorithm used to construct the approximate solutions is well-defined and converges. Moreover, it is consistent in the sense that the total size of the non-physical waves carried by the composite waves vanishes with $\nu$. In brief, to estimate this quantity we use the notion of generation order, i.e.\ we attach an index $k\ge 1$ to each wave generated in the construction. Then, according to each $k$, we introduce suitable functionals $L_k,Q_k,F_k$ simply by referring the functionals $L,Q,F$ to waves with order $k$. In particular, following the same steps as in \cite{ABCD,ABCD2}, in both cases we can prove that
\begin{equation}\label{eq:decFk}
\tilde{F}_k(t)=\sum_{j\ge k} F_j(t) \le {\mu^{k-1}} F_1(0),
\end{equation}
where $\mu \in\,]0,1[$ is either $\mu_b$ or $\mu_c$. More precisely, in the bubble case we find
\begin{equation*}%\label{eq:mub}
\begin{aligned}
  \mu_b  = \max\bigg\{&\frac{1+K_\zeta^m|\zeta|}{2K_\eta^m-1}\,,\,\frac{1+K_\eta^m|\eta|}{2K_\zeta^m-1}\,,\,\frac{\xi}{1+2K_\eta^{\ell}}\,,\,\frac{\xi}{1+2K_\zeta^{r}}\,,\,\frac{1+K_\eta^m|\eta|}{\xi}\,,\,\frac{1+K_\zeta^m|\zeta|}{\xi}\,,\,\\
  &\frac{1+K_\eta^\ell|\eta|+K_\zeta^\ell|\zeta|}{\xi}\,,\,\frac{1+K_\eta^r|\eta|+K_\zeta^r|\zeta|}{\xi}\,,\,\frac{C_o}{\xi(2K_\eta^m-C_o)}\,,\,\frac{C_o}{\xi(2K_\zeta^m-C_o)}\bigg\}\,;
\end{aligned}
\end{equation*}
while in the increasing-pressure case we have
\begin{equation*}%\label{eq:mus}
\begin{aligned}
  \mu_c  = \max\bigg\{&\frac{1+K_\zeta^m|\zeta|}{2K_\eta^m-1}\,,\,\frac{\xi(1+K_\eta^m|\eta|)}{1+2K_\zeta^m}\,,\,\frac{1+K_\eta^m|\eta|}{\xi}\,,\,\frac{1+K_\eta^\ell(|\eta|+|\zeta|)}{\xi}\,,\,\frac{1+K_\eta^r(|\eta|+|\zeta|)}{\xi}\,,\,\\
  &\frac{\xi|\eta|/2}{(K_\eta^\ell-1/2)|\eta|+(K_\zeta^\ell-K_\zeta^m)|\zeta|}\,,\,\frac{|\zeta|/2}{(K_\zeta^r-1/2)|\zeta|+[K_\eta^r-K_\eta^m(1+|\zeta|/2)]|\eta|}\,,\\
  &\frac{C_o|\zeta|/2}{\xi(K_\zeta^r-C_o/2)|\zeta|+\xi[K_\eta^r-K_\eta^m(1+C_o|\zeta|/2)]|\eta|}\,,\,\frac{C_o}{\xi(2K_\eta^m-C_o)}\bigg\}\,.
\end{aligned}
\end{equation*}
In both cases, simple calculations show that $\mu<1$ because of our choice of keeping strict inequalities in the final parts of Sections~\ref{sec:interbubble} and~\ref{sec:interdstep}. We exploit formula \eqref{eq:decFk} to show that the total size of the composite waves tends to zero, as follows. We have
\begin{align}
    \lefteqn{\left[\text{total size of composite waves}\right]\le}\nonumber \\
    &\le\,\left[\text{size of composite waves of order $\ge k$}\right]+\, \left[\text{size of composite waves of order $<k$}\right] \nonumber\\
    &\le  \, \mu^{k-1}\cdot F_1(0)+\frac{\rho}{2}\,C_o(\rho)(|\eta|+|\zeta|)\,
    \left[\text{number of fronts of order $<k$}\right] \nonumber\\
    &\le \, \mu^{k-1}\cdot m_o+ \frac{\rho}{2}\,C_o(\rho)(|\eta|+|\zeta|)\,
    \left[\text{number of fronts of order $<k$}\right]\,.\label{eq:finale}
\end{align}
Then, \eqref{eq:finale} is less than $1/\nu$ if we choose $k$ sufficiently large to have the first term less than $1/(2\nu)$ and $\rho=\rho_{\nu}(m_o)$ small enough to have the second term less than $1/(2\nu)$.

%\begin{proposition}\label{prop:genorder}
%Let $m_o,\xi,K_{\eta,\zeta}^{\ell,m,r}$ satisfy the assumptions of either Proposition~\ref{prop:lastb} or Proposition~\ref{prop:lasts}. Assume that $F(t)<m_o$ for all $t$, where $F$ is either $F$ or $F$, depending on the case considered. Then, the following holds for $\tau\in {\cal T}_h$, $h\ge 1$:
%\begin{align}\label{Fk-segni-1}
%&\Delta F_{h}<0\,,\qquad \Delta F_{h+1}>0\,,\\ \label{Fk-segni-2}
%&\Delta F_k=0\qquad\,\, \, \hbox{ if }\ k\ge h+2\,.
%\end{align}
%Moreover,
%\begin{equation}
%\label{h=k-1b-XXX}
%[\Delta F_{h+1}]_+ \le \mu \Bigl([\Delta F_h]_-   -
%\sum_{\ell=1}^{h-1} \Delta F_\ell \Bigr)\,.
%\end{equation}
%\end{proposition}

\begin{proof}[End of the proof of Theorem~\ref{thm:main}] As in the proof of \cite[Theorem 2.1]{ABCD2}, in the bubble case (increasing-pressure case) by \eqref{eq:hyp2} and \eqref{eq:boundLb} (\eqref{eq:boundLds}, respectively) we prove that
%\begin{equation}\label{eq:tvpm}
%\begin{aligned}
%\bar{L}^\ell(0)&\le\frac{1}{2}\tv_{x<a}\left(\log(p_o),\frac{u_o}{a_\ell}\right)\,, \\
%\bar{L}^m(0)&\le \frac{1}{2}\tv_{a<x<b}\left(\log(p_o),\frac{u_o}{a_m}\right)\,,\\
%\bar{L}^r(0)&\le \frac{1}{2}\tv_{x>b}\left(\log(p_o),\frac{u_o}{a_r}\right) \,.
%\end{aligned}
%\end{equation}
\begin{equation}\label{eq:tvpm}
\bar{L}(0)\le\frac{1}{2}\tv\left(\log(p_o)\right)+ \frac{1}{2\min\{a_\ell,a_m,a_r\}}\tv\left(u_o\right)\,.
\end{equation}
Now, by \eqref{eq:ximo1} (\eqref{eq:cximos}, respectively) and \eqref{eq:tvpm} we look for an $m_o$ satisfying both inequalities below:
\begin{align}
    \mathcal{H}(|\eta|,|\zeta|)&<\frac{1}{{c(m_o)}}-1 \,=\, \frac{2}{\cosh m_o -1} \, =: \, w(m_o)\,, \label{hyp1}\\
\tv\left(\log(p_o)\right)+ \frac{1}{\min\{a_\ell,a_m,a_r\}}\tv\left(u_o\right)
& < 2m_o\hspace{.8pt} c(m_o) \, =:\, z(m_o)\,. \label{hyp2-1}
\end{align}
Recall from \cite{ABCD2} that $w(r)$ is strictly decreasing and $z(r)$ is strictly increasing, for $r\in\R^+$. Moreover, we have $\mathcal{K}(r)=z\left(w^{-1}(r)\right)$, see \eqref{eq:defK}.
%which can be written explicitly as
%\begin{equation}\label{eq:K-explicit}
%\mathcal{K}(r)=\frac{2}{(1+r)^2}\,c^{-1}\left(\frac{1}{1+r}\right)
%=\frac{2}{(1+r)^2} \log\left(\frac{2}{r}+1+\frac{2}{r}\sqrt{1+r}\right)\,.
%\end{equation}
Hence, by \eqref{eq:hyp2} %namely
%\begin{equation*}
%  \tv\left(\log(p_o)\right) \,+\, \frac{1}{\min\{a_r,a_\ell\}}  \tv(u_o) < K(|\delta_2|) \,,
%\end{equation*}
one can choose $m_o$ such that \eqref{hyp1}, \eqref{hyp2-1} hold in both the cases. Therefore, we can conclude as in \cite{Bressanbook} and Theorem~\ref{thm:main} is completely proved.
\end{proof}

\bigskip

As in \cite{ABCD2}, we want to compare the results obtained here in the bubble case%\footnote{e il caso increasing-pressure?}
with that of \cite{amadori-corli-siam}. More precisely, we set $x=|\eta|$, $y=|\zeta|$ and we claim that
%$$
%\mathcal{H}_b(x,y):= \max\left\{\frac{1+y/2}{1-y/2}\,\frac{x}{1-xy/4},\,
%\frac{1+x/2}{1-x/2}\,\frac{y}{1-xy/4}\right\}\,,
%\quad x,y\in [0,2[\times[0,2[\,,
%$$
\begin{equation}
    \label{eq:bubsima}
    \mathcal{H}_b(x,y)\le x+y\qquad\text{for }0\le x+y<1/2.
\end{equation}
Since $\mathcal{H}_b$ is a symmetric function of $x$ and $y$, it suffices to verify that in the common domain it holds
\begin{equation}
    \label{eq:bubsima1}
    \frac{(2+x)4y}{(2-x)(4-xy)}<x+y\,.
    %\frac{1+x/2}{1-x/2}\,\frac{y}{1-xy/4}\le x+y,
\end{equation}
By simplifying expression \eqref{eq:bubsima1}, we find that it is equivalent to
\[
x^2y + xy^2 -(2xy + 2y^2 + 4x + 8y) + 8 >0\,,
\]
which will be satisfied if $xy + y^2 + 2x + 4y < 4$. Since $x<1/2-y$, this last inequality is verified if
\[
\left(\frac12-y\right)y + y^2 + 2\left(\frac12-y\right) + 4y < 4\,,
\]
that is when $y < 6/5$. Therefore, \eqref{eq:bubsima} holds and, since $\mathcal{K}$ is decreasing, we have
$$
\mathcal{K}\left(\mathcal{H}_{b}(|\eta|,|\zeta|)\right) >\mathcal{K}\left(|\eta| + |\zeta|\right)
$$
in the common domain $|\eta|+|\zeta|<1/2$. Hence, Theorem~\ref{thm:main} improves \cite[Theorem 2.2]{amadori-corli-siam} in the bubble case.

%%%%%%%%%%%%%%%%%%%%%%%%%%%%%%%%%%%%%%%%%%%%%%%%%%%%%%%%%%%%%%%%%%%%%%%

{\small
%\bibliography{refe}
\bibliographystyle{abbrv}

}

%%%%%%%%%%%%%%%%%%%%%%%%%%%%%%%%%%%%%%%%%%%%%%%%%%%%%%%%%%%%%%%%%

%%%%%%%%%%%%%%%%%%%%%%%%%%%%%%%%%%%%%%%%%%%%%%%%%%%%%%%%%%%%%%%%%

\end{document}